\documentclass[psamsfonts]{amsart}

 \makeatletter
       \def\@makefnmark{%
               \leavevmode
               \raise.9ex\hbox{\check@mathfonts
                       \fontsize\sf@size\z@\normalfont%
                               \@thefnmark}%
       }
       \makeatother
       
\usepackage{amsmath,amssymb,mathrsfs}
\usepackage[dvipdfmx,usenames]{color}
\usepackage[all]{xy}
\usepackage[dvipdfmx]{graphicx}
\usepackage{wrapfig}
\usepackage{tabularx}
\usepackage{supertabular}
\usepackage{setspace}
\usepackage{xypic}
\usepackage{enumerate} % option \begin{enumerate}[(a)]

\newtheorem{dfn}{Definition}[section]
\newtheorem{thm}[dfn]{Theorem}
\newtheorem{lem}[dfn]{Lemma}
\newtheorem{prp}[dfn]{Proposition}
\newtheorem{rmk}[dfn]{Remark}

\newtheorem{mthm}{Theorem}

\newcommand{\ind}{\mathrm{index}}

\title[Cyclic cocycle and relative-partitioned index theorem]
{A cyclic cocycle and relative index theorems on partitioned manifolds}
\author{Tatsuki SETO}

\address{Graduate School of Mathematics, Nagoya University, Furocho, Chikusaku, Nagoya, Japan}
\email{m11034y@math.nagoya-u.ac.jp}
\subjclass[2000]{Primary 19K56; Secondary 46L87.}
\keywords{Coarse geometry, Dirac operator, Index theory, 
Partitioned manifolds, Relative index  theorem, Roe cocycle, 
Toeplitz operator}

\begin{document}

\begin{abstract}
In this paper, 
we extend Roe's cyclic $1$-cocycle to 
relative settings. 
We also prove two relative index theorems 
for partitioned manifolds by using its cyclic cocycle, 
which are generalizations of 
index theorems on partitioned manifolds. 
One of these % relative index 
theorems is a variant of 
\cite[Theorem 3.3]{KSZ-rp}. 
% Karami-Sadegh-Zaden, Relative-partitioned index theorem
\end{abstract}

\maketitle

\section*{Introduction}

Let $M$ be a complete Riemannian manifold and 
assume that $M$ is partitioned by a closed submanifold $N$ of 
codimension $1$ into two submanifolds $M^{+}$ and $M^{-}$ 
with common boundary 
$N = M^{+} \cap M^{-} = \partial M^{+} = \partial M^{-}$. 
In this setting, 
J. Roe \cite{MR996446}  
% Roe, Partitioning noncompact manifolds and the dual Toeplitz problem
defined a cyclic $1$-cocycle $\zeta_{N}$ 
and proved the following index theorem on partitioned manifolds. 

Let $D$ be the Dirac operator over $M$ 
and $D_{N}$ the graded Dirac operator over $N$ 
which is induced by $D$. 
In \cite{MR996446},   
% Roe, Partitioning noncompact manifolds and the dual Toeplitz problem 
Roe defined a coarse index class 
$\text{\rm c-ind}(D) = [u_{D}] \in K_{1}(C^{\ast}(M))$,  
which is a $K_{1}$-class of the Roe algebra $C^{\ast}(M)$ 
and represented by the Cayley transform $u_{D}$ of $D$. 
Roe's cyclic $1$-cocycle $\zeta_{N}$ induces 
an additive map 
$(\zeta_{N})_{\ast} : K_{1}(C^{\ast}(M)) \to \mathbb{C}$ 
by using Connes' pairing.  
By using these ingredients, 
Roe proved an index theorem on partitioned manifolds: 
\begin{equation}
\label{eq:Roe}
(\zeta_{N})_{\ast}(\text{c-ind}(D)) = 
-\frac{1}{8\pi i}
\ind (D_{N}^{+}), 
\end{equation}
here $\ind (D_{N}^{+})$ in the right hand side is the Fredholm index of $D_{N}^{+}$. 

On the other hand, 
because of the vanishing of the Fredholm index 
of the Dirac operator on closed manifolds of odd dimension, 
the value $(\zeta_{N})_{\ast}(\text{c-ind}(D))$ is trivial when 
$M$ is of even dimension. 
The author 
\cite{seto2,MR3599501}
% Seto, 
% Toeplitz operators and the {R}oe-{H}igson type index theorem 
% in {R}iemannian surfaces, 
% Toeplitz operators and the {R}oe-{H}igson type index theorem
proved an index theorem with a nontrivial value 
$(\zeta_{N})_{\ast}(x)$ for some $x \in K_{1}(C^{\ast}(M))$ when 
$M$ is of even dimension. 
The index theorem is as follows. 

Let $C_{\mathrm{w}}(M)$ be a $C^{\ast}$-algebra 
generated by bounded and smooth functions on $M$ 
with which gradient is bounded. 
A ``good'' $GL_{l}(\mathbb{C})$-valued 
continuous function $\phi \in GL_{l}(C_{\mathrm{w}}(M))$ 
defines a $K_{1}$-class $[\phi ] \in K_{1}(C_{\mathrm{w}}(M))$. 
The author defined a $KK$-class 
$[D] \in KK(C_{\mathrm{w}}(M) , C^{\ast}(M))$ for 
the graded Dirac operator $D$ and 
a coarse Toeplitz index
\[
\text{\rm c-ind}(\phi , D) 
= [\phi] \hat{\otimes}_{C_{\mathrm{w}}(M)} [D] \in K_{1}(C^{\ast}(M))
\] 
by using the Kasparov product 
\[
\hat{\otimes}_{C_{\mathrm{w}}(M)} : 
K_{1}(C_{\mathrm{w}}(M)) \times KK(C_{\mathrm{w}}(M) , C^{\ast}(M)) 
\to K_{1}(C^{\ast}(M)). 
\]  

On the other hand, 
let $\mathcal{H}$ be the 
closed subspace 
of the $L^{2}$-sections which is 
generated by the non-negative eigenvectors 
of the Dirac operator $D_{N}$ 
and $P$ the projection onto $\mathcal{H}^{l}$. 
Define the Toeplitz operator $T_{\phi|_{N}} : \mathcal{H}^{l} \to \mathcal{H}^{l}$ 
by $T_{\phi|_{N}}(s) = P\phi|_{N}s$. 
The Toeplitz opeator $T_{\phi |_{N}}$ 
is Fredholm since the values of $\phi|_{N}$ are in $GL_{l}(\mathbb{C})$. 
Then the author proved 
\begin{equation}
\label{eq:Seto}
(\zeta_{N})_{\ast}(\text{\rm c-ind}(\phi , D)) 
= 
-\frac{1}{8 \pi i}
\ind (T_{\phi|_{N}}). 
\end{equation}

In this paper, 
we generalize (\ref{eq:Roe}) and (\ref{eq:Seto}) to 
relative index settings partitioned by 
(possibly non-compact) submanifolds of codimension $1$. 
For this purpose, 
we generalize three ingredients, 
the index class in $K_{1}(C^{\ast}(M))$, 
the cyclic cocycle $\zeta_{N}$ 
and the Fredholm index on $N$ 
to the case of relative index settings, respectively. 

%For this purpose, 
%we generalize index classes and Roe's cyclic cocycle in the left hand side. 
%Then M. Gromov-H.B.Lawson's relative topological indices 
%\cite{MR720933} 
%% Gromov-Lawson 
%% Positive scalar curvature and the {D}irac operator 
%% on complete {R}iemannian manifolds
%appear in the right hand side. 

Let $M_{1}$ and $M_{2}$ be two complete Riemannian manifolds 
and 
$W_{1} \subset M_{1}$ and $W_{2} \subset M_{2}$ 
are closed subsets. 
Assume that 
%all geometric structures and 
%operators on 
%the outside of a closed subset $W_{1} \subset M_{1}$ 
%agree with those on 
%the outside of a closed subset $W_{2} \subset M_{2}$. 
there exists an isometry 
$\psi : M_{2} \setminus W_{2} \to M_{1} \setminus W_{1}$ 
such that 
$\psi$ conjugates all ingredients, 
for example, 
$D_{1} = (\psi^{\ast})^{-1} D_{2} \psi^{\ast}$ 
for the Dirac operators. 
Denote by 
$C^{\ast}(W_{1} \subset M_{1})$ the relative ideal 
in the Roe algebra $C^{\ast}(M_{1})$, 
which is generated by 
controlled and locally tracable operators 
which is supported near $W_{1}$. 
In this setting, 
Roe 
\cite{MR3439130}  
% Roe, Positive curvature, partial vanishing theorems and coarse indices 
defined 
a coarse relative index 
$\text{\rm c-ind}(D_{1} , D_{2}) \in K_{1}(C^{\ast}(W_{1} \subset M_{1}))$ 
(see also 
\cite{KSZ-rp},  
% Karami-Sadegh-Zaden, Relative-partitioned index theorem
\cite{MR1399087} 
% Roe, Index Theory, Coarse Geometry, and Topology of Manifolds
%\cite{MR3439130}  
%% Roe, Positive curvature, partial vanishing theorems and coarse indices 
and Definition \ref{dfn:crelind}), 
which is a generalization of Roe's odd index.  
In this paper, 
we also define 
a coarse relative Toeplitz index 
$\text{\rm c-ind}(\phi_{1} , D_{1}, \phi_{2} , D_{2}) 
\in K_{1}(C^{\ast}(W_{1} \subset M_{1}))$ 
(see Definition \ref{dfn:crelToeind}), 
which is a generalization of 
the coarse Toeplitz index 
$\text{\rm c-ind}(\phi , D)$. 
Roughly speaking, these coarse relative index classes 
are given by the difference of odd index classes 
for non-relative settings, respectively. 

We define the Roe type cyclic $1$-cocycle 
on a dense subalgebra in the relative ideal $C^{\ast}(W_{1} \subset M_{1})$ 
when $M_{i}$ is partitioned by 
a (possibly non-compact) submanifold $N_{i}$; 
see Section \ref{sec:cocycle}. 
The cyclic cocycle $\zeta$ induces an additive map 
$\zeta_{\ast} : K_{1}(C^{\ast}(W_{1} \subset M_{1})) \to \mathbb{C}$. 
In our main theorems, 
we send above coarse relative index classes by $\zeta_{\ast}$,  
then 
we get relative topological indices on $N_{i}$ 
which are introduced by  
M. Gromov and H. B. Lawson 
\cite{MR720933}.  
% Gromov-Lawson 
% Positive scalar curvature and the {D}irac operator 
% on complete {R}iemannian manifolds
These are generalizations of index theorems on partitioned manifolds.  
Note that, 
Theorem 1 is a variant of 
\cite[Theorem 3.3]{KSZ-rp}.  
% Karami-Sadegh-Zaden, Relative-partitioned index theorem

\begin{mthm}{\rm (see Theorem \ref{thm:main1})}
Let $(M_{i}, W_{i} , D_{i})$ be 
a tuple of 
a complete Riemannian manifold $M_{i}$ partitioned by $N_{i}$, 
a closed subset $W_{i}$ 
and 
the Dirac operator $D_{i}$ 
as previously. 
Then the following formula holds: 
\[
\zeta_{\ast}(\text{\rm c-ind}(D_{1} , D_{2})) 
= -\frac{1}{8\pi i}\text{\rm ind}_{t}(D_{N_{1}} , D_{N_{2}}) ,  
\]
here the right hand side is Gromov-Lawson's relative topological index. 
\end{mthm}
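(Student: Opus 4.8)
The strategy is to reduce the relative statement to the known non‑relative theorem \eqref{eq:Roe} by exploiting the fact that the coarse relative index $\text{c-ind}(D_1,D_2)$ is, morally, a difference of the odd index classes on $M_1$ and $M_2$, and that the cyclic cocycle $\zeta$ and Gromov–Lawson's relative topological index are similarly ``differences'' localised near $W_i$. First I would fix an auxiliary closed partitioning submanifold and arrange, using the isometry $\psi : M_2 \setminus W_2 \to M_1 \setminus W_1$, that $N_1$ and $N_2$ are identified away from the $W_i$; after a compactly‑supported perturbation we may assume $N_i$ meets a neighbourhood of $W_i$ in a controlled way, so that $D_{N_1}$ and $D_{N_2}$ agree outside a compact set and $\text{ind}_t(D_{N_1},D_{N_2})$ is the genuine relative index of Gromov–Lawson. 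The point of this step is purely bookkeeping: to put both sides of the claimed identity into a form where the localisation near $W_1$ is manifest.

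Next I would unwind the definition of $\text{c-ind}(D_1,D_2) \in K_1(C^\ast(W_1 \subset M_1))$ (Definition \ref{dfn:crelind}): choose Cayley transforms $u_{D_1}$ and $u_{D_2}$, transport $u_{D_2}$ via $\psi$ to an operator on $M_1$ supported away from $W_1$, and represent the relative class by $u_{D_1} u_{\psi^\ast D_2}^{-1}$ (or the appropriate invertible in the unitalisation of the relative ideal). Applying $\zeta_\ast$ and using that $\zeta$ is a cyclic $1$‑cocycle — so that the pairing is additive on products up to the coboundary terms, and the coboundary contributions vanish because of the support conditions — I would get
\[
\zeta_\ast(\text{c-ind}(D_1,D_2)) = (\zeta_{N_1})_\ast(\text{c-ind}(D_1)) - (\zeta_{N_2})_\ast(\text{c-ind}(D_2)),
\]
where each term on the right is the honest Roe pairing of \eqref{eq:Roe} on $M_i$ (here one must check that $\zeta$ restricted to the part of the algebra coming from $M_i$ coincides with Roe's $\zeta_{N_i}$, which follows from the explicit formula for $\zeta$ in Section \ref{sec:cocycle}).

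Then I would apply Roe's theorem \eqref{eq:Roe} on each $M_i$ separately, obtaining
\[
\zeta_\ast(\text{c-ind}(D_1,D_2)) = -\frac{1}{8\pi i}\bigl(\text{ind}(D_{N_1}^+) - \text{ind}(D_{N_2}^+)\bigr),
\]
and finally identify the right‑hand side with $-\tfrac{1}{8\pi i}\text{ind}_t(D_{N_1},D_{N_2})$ by the definition of the Gromov–Lawson relative topological index as the difference of the two Fredholm indices under the given isometric identification of ends. The main obstacle I anticipate is the second step: justifying that $\zeta_\ast$ is additive under the relevant products and that all the cross/coboundary terms are negligible requires care about which operators actually lie in the (dense subalgebra of the) relative ideal $C^\ast(W_1 \subset M_1)$, and about the non‑compactness of $N_i$ — in particular one must verify that the traces defining $\zeta$ converge on the relevant operators supported near $W_1$ even though $N_i$ itself may be non‑compact. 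A secondary technical point is checking that perturbing the Dirac operators and the submanifolds near $W_i$ changes neither side, which should follow from homotopy invariance of the coarse relative index class and invariance of the relative topological index under compactly‑supported deformations.
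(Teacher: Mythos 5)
There is a genuine gap, and it sits exactly at the step you flag as the "main obstacle." Your proposed decomposition
\[
\zeta_\ast(\text{\rm c-ind}(D_1,D_2)) = (\zeta_{N_1})_\ast(\text{\rm c-ind}(D_1)) - (\zeta_{N_2})_\ast(\text{\rm c-ind}(D_2))
\]
is not merely delicate to justify --- the two terms on the right are not defined when the $N_i$ are non-compact, which is the whole point of the relative setting. Roe's cocycle $\zeta_{N_i}$ does not extend to all of $C^{\ast}(M_i)$ in that case: the proof that $[\Lambda, A]$ is trace class (Lemma \ref{lem:cycwell}) uses that $A$ is supported near $W_i$ together with the compactness of $Z_i = N_i \cap W_i$ and coarse transversality, so that $\Pi A(1-\Pi)$ is supported in a compact set. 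For a general locally traceable $A \in C^{\ast}(M_i)$ with non-compact $N_i$ there is no reason for $[\Lambda, A]$ to be trace class, so $(\zeta_{N_i})_\ast$ on $K_1(C^{\ast}(M_i))$ does not exist. For the same reason your final identification fails: $D_{N_i}^{+}$ is not Fredholm on a non-compact $N_i$, and Gromov--Lawson's $\text{\rm ind}_t(D_{N_1},D_{N_2})$ is \emph{not} "the difference of the two Fredholm indices"; it is defined by chopping off the $N_i$ outside $Z_i$, embedding the resulting compact pieces into closed manifolds $\widetilde{N_i}$, and taking $\ind(\widetilde{D_{N_1}}^{+}) - \ind(\widetilde{D_{N_2}}^{+})$.

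The missing idea is therefore the compactification step, and it must come \emph{before} any splitting into two non-relative problems. The paper's route is: (i) reduce to the product case $M_i = \mathbb{R}\times N_i$, $W_i = \mathbb{R}\times Z_i$ by cutting along a partition pushed into a collar and gluing on a cylinder (this uses the cobordism invariance of $\zeta_\ast$, Lemma \ref{lem:cobor}, and the Higson-type Lemma \ref{lem:Higson}, together with the explicit representative $f(D_1)U^{\ast}f(D_2)^{\ast}U$ of the relative class with finite-propagation control, Lemma \ref{lem:red1}); (ii) in the product case, use the fact that both the coarse relative index and the cocycle only depend on an $r$-neighbourhood of $W_1$ (Lemmas \ref{lem:cindloc} and \ref{lem:locW}) to replace $N_i$ by the closed compactifications $\widetilde{N_i}$ from the definition of $\text{\rm ind}_t$; (iii) only then, on $\mathbb{R}\times\widetilde{N_i}$ with $\widetilde{N_i}$ closed, does the relative class split as a difference of two honest classes, and Roe's theorem (\ref{eq:Roe}) applies to each summand. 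Your instinct that the answer should come from two applications of (\ref{eq:Roe}) is correct, but without the localization-and-compactification machinery the intermediate objects in your argument do not exist.
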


\begin{mthm}{\rm (see Theorem \ref{thm:main2})}
Let $(M_{i}, W_{i} , D_{i})$ be 
a tuple of 
a complete Riemannian manifold $M_{i}$ partitioned by $N_{i}$, 
a closed subset $W_{i}$ 
and 
the Dirac operator $D_{i}$ 
as previously. 
Take $\phi_{i} \in GL_{l}(C_{\mathrm{w}}(M_{i}))$ 
such that $\phi_{2} = \phi_{1} \circ \psi$. 
%Assume that $\phi_{1}$ is equal to $\phi_{2}$ 
%far from $W_{1}$ and $W_{2}$.  
Then the following formula holds: 
\[
\zeta_{\ast}(\text{\rm c-ind}(\phi_{1} , D_{1} , \phi_{2} , D_{2})) 
= -\frac{1}{8\pi i}\text{\rm ind}_{t}(\phi_{N_{1}} , D_{N_{1}} , \phi_{N_{2}} , D_{N_{2}}) . 
\]
\end{mthm}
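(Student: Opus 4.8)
The plan is to reduce Theorem 2 to Theorem 1 by means of the Kasparov-product description of the coarse relative Toeplitz index together with the behaviour of Roe's cocycle under the pairing. First I would recall that, by Definition \ref{dfn:crelToeind}, the class $\text{\rm c-ind}(\phi_{1} , D_{1} , \phi_{2} , D_{2})$ is built as a difference of non-relative Toeplitz index classes $\text{\rm c-ind}(\phi_{i} , D_{i}) = [\phi_{i}] \hat{\otimes}_{C_{\mathrm{w}}(M_{i})} [D_{i}]$ which agree, via $\psi$, away from the $W_{i}$, so that the difference genuinely lives in $K_{1}(C^{\ast}(W_{1} \subset M_{1}))$. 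Applying $\zeta_{\ast}$ and using additivity of the map induced by the cyclic cocycle, the left-hand side becomes a difference of two pairings $\zeta_{\ast}(\text{\rm c-ind}(\phi_{i} , D_{i}))$, each computed on the partitioned manifold $M_{i}$ against the Roe-type $1$-cocycle restricted to the relevant side. The key point is then to invoke the non-relative formula \eqref{eq:Seto} for each $i$, which identifies $\zeta_{\ast}(\text{\rm c-ind}(\phi_{i} , D_{i}))$ with $-\frac{1}{8\pi i}\,\text{\rm ind}(T_{\phi_{i}|_{N_{i}}})$. Subtracting, the right-hand side assembles into $-\frac{1}{8\pi i}$ times the difference of the two Toeplitz indices on $N_{1}$ and $N_{2}$, which is by definition the Gromov--Lawson relative topological index $\text{\rm ind}_{t}(\phi_{N_{1}} , D_{N_{1}} , \phi_{N_{2}} , D_{N_{2}})$.

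The subtlety that makes this more than a formal subtraction is that \eqref{eq:Seto} as originally proved requires the ambient index class to be an honest element of $K_{1}(C^{\ast}(M_{i}))$, whereas here we must track the class through the relative ideal. The second step is therefore to verify the compatibility of the pairing with the ideal inclusion: one needs that $\zeta_{\ast}$ on $K_{1}(C^{\ast}(W_{1} \subset M_{1}))$, composed with the map induced by $C^{\ast}(W_{1} \subset M_{1}) \hookrightarrow C^{\ast}(M_{1})$, agrees with the splitting of the difference into the two non-relative pairings. This is where the construction of $\zeta$ in Section \ref{sec:cocycle} is used essentially: the cocycle is defined on a dense subalgebra of the relative ideal so that, after restricting to operators supported near $W_{i}$ and pushing the isometry $\psi$ through, the contributions from $M_{1} \setminus W_{1}$ and $M_{2} \setminus W_{2}$ cancel, leaving exactly the near-$N_{i}$ local data. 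Concretely I would write the Cayley transform $u_{\phi_{i}, D_{i}}$ of the Toeplitz-type operator on each $M_{i}$, form the difference as a unitary (or invertible) over the relative ideal, and evaluate $\zeta$ on it directly, reproducing the computation of \cite{seto2, MR3599501} twice and observing the cancellation.

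I expect the main obstacle to be precisely the bookkeeping of supports in that cancellation: showing that the explicit integral-type expression defining $\zeta$ on the difference class really decomposes as the sum of the two Roe-cocycle evaluations on $M_{1}$ and $M_{2}$, with the "exterior" contributions annihilating each other because $\psi$ conjugates all the data. This requires care because $N_{i}$ need not be compact and may intersect $W_{i}$, so one cannot simply quote the compact-hypersurface computation verbatim; instead one localises near $N_{i}$ using the finite-propagation of the Cayley transforms and a partition-of-unity argument, exactly as in the proof of Theorem \ref{thm:main1}. Once that localisation is established, the remainder is the routine identification of the difference of Fredholm indices of the two Toeplitz operators $T_{\phi_{i}|_{N_{i}}}$ with Gromov--Lawson's $\text{\rm ind}_{t}$, which is formal from the definition of the relative topological index. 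In fact, the cleanest presentation is probably to prove Theorem 1 and Theorem 2 in parallel, since the only difference is the replacement of the Dirac operator's Cayley transform by that of the $\phi_{i}$-twisted Toeplitz operator, and the support analysis is identical.
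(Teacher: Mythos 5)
Your overall plan---split the relative class into two non-relative Toeplitz classes and apply the known formula \eqref{eq:Seto} twice---is the right \emph{final} step, but as written it is applied at a stage where neither side of that formula makes sense, and the device the paper uses to make it legitimate is missing. The individual classes $\text{\rm c-ind}(\phi_{i},D_{i})$ live in $K_{1}(C^{\ast}(M_{i}))$, not in the relative ideal, and the cocycle $\zeta$ pairs only with $K_{1}(C^{\ast}(W_{1}\subset M_{1}))$: Lemma \ref{lem:cycwell} produces trace-class commutators $[\Lambda,A]$ only for $A$ supported near $W$, using compactness of $Z=N\cap W$. So the quantity $\zeta_{\ast}(\text{\rm c-ind}(\phi_{i},D_{i}))$ you propose to compute for each $i$ separately is undefined when $N_{i}$ is non-compact. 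Worse, the right-hand sides $\ind(T_{\phi_{i}|_{N_{i}}})$ are also undefined: the Toeplitz operators on the non-compact $N_{i}$ are not Fredholm, and $\text{\rm ind}_{t}$ is \emph{not} defined as their difference---it is defined via compactifications $\widetilde{N_{i}}$ of neighborhoods of the compact sets $Z_{i}$. Your closing remark that the identification with $\text{\rm ind}_{t}$ is ``formal from the definition'' therefore does not go through. Your proposed remedy (localising near $N_{i}$ by finite propagation and a partition of unity) does not repair this, because the obstruction is not one of supports on $M_{i}$ but the non-Fredholmness of every individual term.

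The paper's proof supplies exactly the two missing reductions. First, one deforms to the product case $M_{i}=\mathbb{R}\times N_{i}$, $W_{i}=\mathbb{R}\times Z_{i}$, by cutting along a pushed-off partition and gluing on a cylinder (Lemmas \ref{lem:cobor}, \ref{lem:Higson}, and the Toeplitz analogue of Lemma \ref{lem:red1}). Second---and this is the key idea you are missing---one \emph{compactifies} $N_{i}$ to closed manifolds $\widetilde{N_{i}}$ exactly as in the Gromov--Lawson definition of $\text{\rm ind}_{t}$, and transfers the relative index class through the map $\Gamma$ of Lemma \ref{lem:locW}; Lemma \ref{lem:loccToe} shows the class is unchanged because the representative $u_{\phi_{1}}U^{\ast}u_{\phi_{2}}^{-1}U-1$ is supported near the common compact set $Z_{i}$, where the two models agree. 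Only on $\widetilde{M_{i}}=\mathbb{R}\times\widetilde{N_{i}}$, with $\widetilde{N_{i}}$ closed and a unitary satisfying $\widetilde{U}\widetilde{\Pi_{1}}=\widetilde{\Pi_{2}}\widetilde{U}$, does the Fredholm index split as a difference of two well-defined non-relative indices, to which \cite[Theorem 2.6]{seto2} applies and yields $\ind(T_{\widetilde{\phi_{N_{1}}}})-\ind(T_{\widetilde{\phi_{N_{2}}}})=\text{\rm ind}_{t}(\phi_{N_{1}},D_{N_{1}},\phi_{N_{2}},D_{N_{2}})$. Without the compactification step your argument cannot be completed.
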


The strategy of the proof of the theorems is the following. 
Firstly, 
we reduce to the product case, 
which is similar to the case for index theorems on partitioned manifolds. 
Secondly, 
we prove the product case. 
In the second step, 
we use index theorems (1) and (2). 

Note that, 
in the definition of 
relative topological indices in the right hand sides, 
we use 
compactifications of neighborhoods of 
$N_{1} \cap W_{1}$ and $N_{2} \cap W_{2}$. 
However, in our proof, we do not use the fact that 
relative topological indices 
do not depend on the choice of 
such compactifications.  
Thus our main theorems give a new proof 
of well definedness of relative topological indices, respectively.

\section{Index classes}

\subsection{Relative index data}
\label{subsec:reldata}

Let $M$ be a complete Riemannian manifold  
and $W \subset M$ a closed subset. 
In this subsection, 
we recall the notion of a relative index data over a pair $(M,W)$ 
and a relative ideal in the Roe algebra. 
Coarse relative indices are elements in $K$-theory of 
its ideal. 
See 
\cite{MR3439130}
% Roe, Positive curvature, partial vanishing theorems and coarse indices
for details of these notions. 

\begin{dfn}
Let $M_{i}$ ($i=1,2$) be a complete Riemannian manifold and 
$D_{i}$ the Dirac operator 
on a Clifford bundle $S_{i} \to M_{i}$. 
We call $(M_{i}, W_{i}, D_{i})$ an odd relative index data 
over $(M,W)$
if the following holds: 
\begin{itemize} 
\item 
$W_{i} \subset M_{i}$ is a closed subset, 
\item 
there exists 
isometry $\psi : M_{2} \setminus W_{2} \to M_{1} \setminus W_{1}$ 
which induces isometry of Clifford bundles 
$\psi^{\ast} : S_{1}|_{M_{1} \setminus W_{1}} \to S_{2}|_{M_{2} \setminus W_{2}}$, 
\item 
there exists a continuous coarse map $f_{i} : M_{i} \to M$ 
such that 
$f(W_{i}) = W$, $W_{i} = f^{-1}(W)$ and 
$f_{1} \circ \psi = f_{2}$. 
\end{itemize}

An even relative index data is given by 
an odd relative index data $(M_{i}, W_{i}, D_{i})$ 
together with a $\mathbb{Z}_{2}$-grading $\epsilon_{i}$ on 
a Clifford bundle $S_{i}$ 
such that 
$D_{i}$ is the graded Dirac operator on $S_{i}$ and 
$\psi^{\ast}$ respects $\mathbb{Z}_{2}$-gradings. 
We omit odd or even when it is not important. 
\end{dfn}

Coarse relative indices are constructed by using 
a relative index data 
and 
are elements in $K$-theory of the relative ideal 
of the Roe algebra. 
Let $S \to M$ be a Hermitian vector bundle and 
recall that 
a bounded operator $T : L^{2}(M,S) \to L^{2}(M,S)$ 
is \textit{controlled} if 
there exists a constant $R > 0$ such that 
$\varphi T \psi = 0$ when $\varphi , \psi \in C_{c}(M)$ 
satisfy $d(\mathrm{Supp}(\varphi) , \mathrm{Supp}(\psi)) > R$. 
The infimum of such $R > 0$ is called 
\textit{propagation} of 
a controlled operator $T$. 
A bounded operator $T$ on $L^{2}(M,S)$ is \textit{locally tracable} 
(resp. \textit{lacally compact})
if 
$\varphi T \psi$ is of trace class (resp. compact) 
for any $\varphi , \psi \in C_{c}(M)$. 
The Roe algebra $C^{\ast}(M)$ is defined to be 
the norm closure of 
the set of controlled and locally tracable operators on $L^{2}(M,S)$. 

An operator $T$ is 
\textit{supported near $W$} if 
there exists constant $r > 0$  
such that 
$\varphi T = 0$ and $T \varphi = 0$ when 
$\varphi \in C_{c}(M)$ satisfies $d(\mathrm{Supp}(\varphi) , W) > r$. 
We call $T$ is \textit{supported in $N_{r}(W)$} 
by using such a constant $r$. 
Here, we set 
$N_{r}(A) = \{ x \in M \,;\, d(x,A) \leq r \}$ 
for a subset $A \subset M$. 
Denote by $\mathscr{B}_{W}$ 
the set of 
controlled and locally tracable operators which is supported near $W$. 
The relative ideal $C^{\ast}(W \subset M)$ 
is an ideal in $C^{\ast}(M)$ 
generated by $\mathscr{B}_{W}$. 

Let $(M_{i} , W_{i} , D_{i})$ is 
a relative index data over $(M,W)$ and 
denote by 
$\pi_{i} : C^{\ast}(M_{i}) \to C^{\ast}(M_{i})/C^{\ast}(W_{i} \subset M_{i})$ 
the projection onto the quatient. 
The isometry $\psi : M_{2} \setminus W_{2} \to M_{1} \setminus W_{1}$ 
appeared in a relative index data 
induces an isomorphism of $C^{\ast}$-algebras: 
\[
\Psi : C^{\ast}(M_{1})/C^{\ast}(W_{1} \subset M_{1}) 
	\to C^{\ast}(M_{2})/C^{\ast}(W_{2} \subset M_{2}). 
\]
As is well known, 
we have $f(D_{i}) \in C^{\ast}(M_{i})$ for any $f \in C_{0}(\mathbb{R})$. 
The isomorphism $\Psi$ gives 
a correspondence of 
$\pi_{1}(f(D_{1}))$ and $\pi_{2}(f(D_{2}))$. 

\begin{lem}
\cite[Lemma 4.3]{MR3439130}
% Roe, Positive curvature, partial vanishing theorems and coarse indices
\label{lem:C*}
Let $(M_{i} , W_{i} , D_{i})$ be a relative index data 
over $(M,W)$. 
For any $f \in C_{0}(\mathbb{R})$, we have 
\[
\Psi (\pi_{1}(f(D_{1}))) = \pi_{2}(f(D_{2})). 
\]
\end{lem}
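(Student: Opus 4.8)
The plan is to reduce the statement to a density/continuity argument, exploiting that the conclusion is a closed condition on $f$. First I would observe that both sides of the asserted equality, $\Psi(\pi_1(f(D_1)))$ and $\pi_2(f(D_2))$, depend continuously (in operator norm) on $f \in C_0(\mathbb{R})$: the maps $f \mapsto f(D_i)$ are $*$-homomorphisms $C_0(\mathbb{R}) \to C^\ast(M_i)$ by functional calculus (so they are norm-decreasing), the quotient maps $\pi_i$ are contractive, and $\Psi$ is a $C^\ast$-algebra isomorphism hence isometric. Therefore the set $\{f \in C_0(\mathbb{R}) : \Psi(\pi_1(f(D_1))) = \pi_2(f(D_2))\}$ is a closed subalgebra of $C_0(\mathbb{R})$. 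It then suffices to verify the identity on a generating set, and the natural choice is the resolvent functions $f_\lambda(t) = (t - \lambda)^{-1}$ for $\lambda \in \mathbb{C} \setminus \mathbb{R}$ (or equivalently the pair $t \mapsto (t \pm i)^{-1}$), which generate $C_0(\mathbb{R})$ as a $C^\ast$-algebra by Stone–Weierstrass.

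Next I would prove the identity for $f = f_\lambda$ by a local comparison near infinity away from $W_i$. The key geometric input is that the isometry $\psi : M_2 \setminus W_2 \to M_1 \setminus W_1$ conjugates the Dirac operators, $D_1 = (\psi^\ast)^{-1} D_2 \psi^\ast$, and hence intertwines their resolvents on the common region. The point of passing to the quotient by $C^\ast(W_i \subset M_i)$ is that elements of that ideal are negligible: an operator supported near $W_i$ becomes zero in the quotient. Concretely, I would show that $(D_i - \lambda)^{-1}$ differs, modulo $C^\ast(W_i \subset M_i)$, from an operator built by cutting off near $W_i$ and transplanting via $\psi^\ast$. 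Using a partition of unity subordinate to a neighborhood of $W_i$ and its complement, together with finite propagation speed for the wave operator (which controls propagation of the resolvent via the Fourier/Laplace transform representation), one writes $(D_1 - \lambda)^{-1} - (\psi^\ast)^{-1}(D_2 - \lambda)^{-1}\psi^\ast$ as an operator supported near $W_1$ up to a locally traceable, controlled remainder, hence lying in $C^\ast(W_1 \subset M_1)$. Applying $\pi_1$ and then identifying $\Psi$ with the map induced by $\psi^\ast$ on the quotients gives $\Psi(\pi_1(f_\lambda(D_1))) = \pi_2(f_\lambda(D_2))$.

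The main obstacle is the middle step: making precise that the resolvent, which is genuinely non-local, is controlled well enough near $W_i$ to differ from its $\psi$-transplant only by an element of the relative ideal. This is where finite propagation speed for $\cos(tD_i)$ is essential — writing $f_\lambda(D_i)$ as a superposition $\int \hat{g}(t)\, e^{itD_i}\, dt$ for suitable $g$ lets one split the integral into a finite-propagation part (which respects the decomposition $M_i = N_r(W_i) \cup (M_i \setminus W_i)$ up to a fixed collar) and a tail that is norm-small; the tail is absorbed using the closedness established in the first step. Once this localization is in hand, the compatibility of $\Psi$ with $\psi^\ast$ on the quotient $C^\ast$-algebras is essentially the definition of $\Psi$, and the proof concludes. (This is exactly the content of \cite[Lemma 4.3]{MR3439130}, which we are invoking.)
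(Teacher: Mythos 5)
The paper gives no proof of this lemma --- it is quoted verbatim from \cite[Lemma 4.3]{MR3439130} --- but your reconstruction matches the standard argument used there, as the paper itself confirms later when it notes that $f(D_1)-U^{\ast}f(D_2)U$ is supported in $N_r(W_1)$ whenever $\mathrm{Supp}(\hat f)\subset(-r,r)$ ``by the proof of [Lemma 4.3]''. Your two ingredients (the equalizer of the two $\ast$-homomorphisms $C_0(\mathbb{R})\to C^{\ast}(M_2)/C^{\ast}(W_2\subset M_2)$ is a closed $\ast$-subalgebra, plus finite propagation speed of $e^{itD_i}$ applied to functions with compactly supported Fourier transform) are exactly the right ones, so the proposal is correct and essentially the same as the cited proof.
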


By Lemma \ref{lem:C*}, 
a pair $(f(D_{1}), f(D_{2}))$ for $f \in C_{0}(\mathbb{R})$ 
defines an element in 
a $C^{\ast}$-algebra 
\[
\mathfrak{C} := \{ (T_{1}, T_{2}) \in C^{\ast}(M_{1}) \oplus C^{\ast}(M_{2}) 
	\,;\, \Psi (\pi_{1}(T_{1})) = \pi_{2}(T_{2}) \}.  
\]

There is a 
$D^{\ast}$-version of this discussion. 
Let $D^{\ast}(M)$ be a $C^{\ast}$-algebra 
generated by controlled and pseudolocal operators, 
here a bounded operator $T$ is pseudolocal if $[T,\varphi]$ 
is compact for any $\varphi \in C_{0}(M)$.  
The relative ideal $D^{\ast}(W \subset M)$ is an ideal in $D^{\ast}(M)$ 
which is generated by 
controlled and pseudolocal operators 
which are supported near $W$ and are 
locally compact on $M \setminus W$. 
Denote by the same letter 
$\pi_{i} : D^{\ast}(M_{i}) \to D^{\ast}(M_{i})/D^{\ast}(W_{i} \subset M_{i})$ 
the projection onto the quatient and 
$\psi$ induces an isometry of $C^{\ast}$-algebras 
\[
\Psi : D^{\ast}(M_{1})/D^{\ast}(W_{1} \subset M_{1}) 
	\to D^{\ast}(M_{2})/D^{\ast}(W_{2} \subset M_{2}).   
\]
A continuous odd fungtion $\chi : \mathbb{R} \to [-1,1]$ 
is a \textit{choping function} (or \textit{normalizing function}) if 
we have $\chi (t) \to \pm 1$ as $t \to \pm \infty$. 
Functional falculus gives an element $\chi (D_{i}) \in D^{\ast}(M_{i})$ 
and then a variant of Lemma \ref{lem:C*} is 
as follows. 

\begin{lem}
\cite[Lemma 4.4]{MR3439130}
% Roe, Positive curvature, partial vanishing theorems and coarse indices 
\label{lem:D*}
Let $(M_{i} , W_{i} , D_{i})$ be a relative index data 
over $(M,W)$. 
For any chopping function $\chi$, we have 
\[
\Psi (\pi_{1}(\chi (D_{1}))) = \pi_{2}(\chi (D_{2})). 
\]
\end{lem}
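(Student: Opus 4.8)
The plan is to deduce the lemma from its Roe-algebra analogue, Lemma \ref{lem:C*}. First I would observe that the validity of the identity is unchanged if $\chi$ is replaced by any other odd continuous function $\chi_{0}$ with $\chi_{0}(t)\to\pm1$ as $t\to\pm\infty$: indeed $\chi-\chi_{0}\in C_{0}(\mathbb{R})$, so $(\chi-\chi_{0})(D_{i})\in C^{\ast}(M_{i})$, and Lemma \ref{lem:C*} already gives $\Psi(\pi_{1}((\chi-\chi_{0})(D_{1})))=\pi_{2}((\chi-\chi_{0})(D_{2}))$; since the inclusions $C^{\ast}(M_{i})/C^{\ast}(W_{i}\subset M_{i})\hookrightarrow D^{\ast}(M_{i})/D^{\ast}(W_{i}\subset M_{i})$ intertwine the two incarnations of $\Psi$ (immediate from the constructions in \cite{MR3439130}), it therefore suffices to treat one convenient $\chi_{0}$.

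I would take $\chi_{0}(t)=\frac{2}{\pi}\int_{0}^{t}\frac{\sin s}{s}\,ds$, whose derivative $\chi_{0}'(t)=\frac{2}{\pi}\,\frac{\sin t}{t}$ has Fourier transform supported in $[-1,1]$. By Fourier inversion and oddness,
\[
\chi_{0}(D_{i})=\frac{1}{\sqrt{2\pi}}\int_{-1}^{1}\widehat{\chi_{0}'}(t)\,\frac{\sin(tD_{i})}{t}\,dt ,
\]
and since each $\sin(tD_{i})/t$ has propagation at most $|t|$ by unit propagation speed, $\chi_{0}(D_{i})$ has propagation at most $1$; it is also pseudolocal because $\chi_{0}$ is continuous, so $\chi_{0}(D_{i})\in D^{\ast}(M_{i})$. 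This is the one genuine departure from the proof of Lemma \ref{lem:C*}: although $\chi_{0}$ is not a $C_{0}$-function, $\chi_{0}(D_{i})$ is still represented by a finite-propagation operator, so no approximation argument is required to control its propagation.

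Finally, using that $\psi$ is an isometry with $D_{1}=(\psi^{\ast})^{-1}D_{2}\psi^{\ast}$ on $M_{i}\setminus W_{i}$, unit propagation speed gives that $\sin(tD_{1})/t$ and $\sin(tD_{2})/t$ are conjugated by $\psi^{\ast}$ at points at distance more than $|t|$ from $W_{i}$; integrating, $\chi_{0}(D_{1})$ and $\chi_{0}(D_{2})$ are conjugated by $\psi^{\ast}$ outside $N_{1}(W_{i})$. As in the proof of Lemma \ref{lem:C*} in \cite{MR3439130}, the construction of $\Psi$ then yields $\Psi(\pi_{1}(\chi_{0}(D_{1})))=\pi_{2}(\chi_{0}(D_{2}))$, provided the resulting discrepancy near $W_{2}$ — a controlled, pseudolocal operator supported in a bounded neighborhood of $W_{2}$ — lies in the relative ideal $D^{\ast}(W_{2}\subset M_{2})$. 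I expect this last verification to be the main obstacle, and the reason it is not automatic is precisely that, unlike the locally traceable operators $g(D_{i})$ ($g\in C_{0}(\mathbb{R})$) used in Lemma \ref{lem:C*}, the operator $\chi_{0}(D_{i})$ is only pseudolocal; one must therefore invoke the extra clause of the definition of $D^{\ast}(W_{i}\subset M_{i})$ (local compactness on $M_{i}\setminus W_{i}$) and check it using that the high-energy behavior of $\chi_{0}(D_{i})$ near $W_{i}$ is the universal one determined by Clifford multiplication on the Clifford bundle identified by $\psi^{\ast}$, so that the discrepancy has vanishing principal symbol off $W_{2}$ and is consequently locally compact there.
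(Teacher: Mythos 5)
The paper offers no proof of this lemma: it is quoted verbatim from \cite[Lemma 4.4]{MR3439130}, so there is nothing in-paper to compare against, and what you have written is essentially a reconstruction of Roe's cited argument. Its skeleton is correct: the difference of two chopping functions lies in $C_{0}(\mathbb{R})$, so Lemma \ref{lem:C*} (together with the fact that $C^{\ast}(W_{i}\subset M_{i})\subset D^{\ast}(W_{i}\subset M_{i})$, whence the $C^{\ast}$-quotients map compatibly into the $D^{\ast}$-quotients) reduces everything to one convenient normalizing function, which is then controlled by unit propagation speed through the Fourier representation of $\chi_{0}'$. One cosmetic remark: $\chi_{0}=\tfrac{2}{\pi}\,\mathrm{Si}$ overshoots $1$ (Gibbs phenomenon, $\tfrac{2}{\pi}\,\mathrm{Si}(\pi)\approx 1.18$), so it is not literally a chopping function in the paper's sense ($\chi:\mathbb{R}\to[-1,1]$); this is harmless because your reduction only uses that $\chi_{0}$ is bounded, odd and tends to $\pm 1$ at $\pm\infty$.

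The one step that needs repair is the last one, which you correctly identify as the main obstacle but then resolve with a ``vanishing principal symbol'' heuristic. That heuristic is the wrong tool here: $\chi_{0}(D_{i})$ and its transplant are not properly supported pseudodifferential operators, and making the symbol comparison rigorous would require exactly the localization you are trying to establish. The clean argument is a refinement of the propagation estimate you already have: for every $\epsilon>0$ write $\chi_{0}=\chi_{\epsilon}+g_{\epsilon}$ with $\widehat{\chi_{\epsilon}'}$ supported in $(-\epsilon,\epsilon)$ and $g_{\epsilon}\in C_{0}(\mathbb{R})$. The $\chi_{\epsilon}$-part of the discrepancy vanishes at distance greater than $\epsilon$ from $W_{1}$ by finite propagation speed, and the $g_{\epsilon}$-part is locally compact everywhere because $g_{\epsilon}(D_{i})\in C^{\ast}(M_{i})$. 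Given $\varphi,\psi\in C_{c}(M_{1}\setminus W_{1})$, their compact supports lie at some positive distance $\epsilon_{0}$ from the closed set $W_{1}$; choosing $\epsilon<\epsilon_{0}$ shows $\varphi\,T\,\psi$ is compact, where $T$ is the discrepancy. This establishes local compactness on $M_{1}\setminus W_{1}$, hence membership in $D^{\ast}(W_{1}\subset M_{1})$, precisely at the points close to $W_{1}$ where the symbol heuristic is hardest to justify. With that substitution your proof is complete and coincides with the one in \cite{MR3439130}.
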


By Lemma \ref{lem:D*}, 
a pair $(\chi (D_{1}), \chi (D_{2}))$ for $f \in C_{0}(\mathbb{R})$ 
defines an element in 
a $C^{\ast}$-algebra 
\[
\mathfrak{D} := \{ (T_{1}, T_{2}) \in D^{\ast}(M_{1}) \oplus D^{\ast}(M_{2}) 
	\,;\, \Psi (\pi_{1}(T_{1})) = \pi_{2}(T_{2}) \}.  
\]

\subsection{Coarse relative index}

Following 
\cite{KSZ-rp} and 
% Karami-Sadegh-Zaden, Relative-partitioned index theorem
\cite{MR3439130},   
% Roe, Positive curvature, partial vanishing theorems and coarse indices 
we define the coarse relative index. 
By Lemma \ref{lem:D*}, 
we have an element 
\[
\partial \left[\frac{\chi (D_{1}) + 1}{2}, \frac{\chi (D_{2}) + 1}{2}\right] 
\in K_{1}(\mathfrak{C})
\] 
for a chopping function $\chi$, 
where $\partial$ is the exponential map in the $6$-term exact sequence 
of a short exact sequence 
$0 \to \mathfrak{C} \to \mathfrak{D} \to 
\mathfrak{D}/\mathfrak{C} \to 0$. 
$K$-theory of $\mathfrak{C}$ can be decomposed as follows. 
Let $V_{i} : L^{2}(W_{1} , S_{1}) \to L^{2}(W , S)$ 
be a unitary which covers 
surjective continuous coarse map 
$f_{i}|_{W_{i}} : W_{i} \to W$. 
We can choose $V_{i}$ with  
arbitrary small propagation, 
here $V_{i}$ has propagation less than $\delta > 0$ 
if 
we have $\varphi V_{i} \psi = 0$ 
for any
$\varphi \in C_{b}(W)$ 
and  $\psi \in C_{b}(W_{i})$ 
with $d(\mathrm{Supp}(\varphi) , f_{i}(\mathrm{Supp}(\psi))) > \delta$. 
We assume that $V_{1}$ and $V_{2}$ have propagation less than $\delta / 2 > 0$. 
Define a unitary operator 
$U   : 
L^{2}(M_{1},S_{1}) \to L^{2}(M_{2},S_{2})$ by 
\[
U = V_{2}^{\ast}V_{1} \oplus \psi^{\ast} : 
L^{2}(W_{1},S_{1}) \oplus L^{2}(M_{1} \setminus W_{1},S_{1}) 
\to 
L^{2}(W_{2},S_{2}) \oplus L^{2}(M_{2} \setminus W_{2},S_{2}).  
\]
$U$ has propagation less than $\delta$ %by definition 
and 
it induces a map 
$Ad(U) : C^{\ast}(M_{1}) \to C^{\ast}(M_{2})$ 
and 
a split of a short exact sequence of $C^{\ast}$-algebras 
\[
0 \to C^{\ast}(W_{1} \subset M_{1}) \to \mathfrak{C} \to C^{\ast}(M_{2}) \to 0. 
\] 
Here, the first map is an inclusion $T_{1} \mapsto (T_{1} , 0)$, 
the second one is the projection $(T_{1} , T_{2}) \mapsto T_{2}$ 
and the split map is 
$C^{\ast}(M_{2}) \ni T_{2} \mapsto (U^{\ast}T_{2}U , T_{2}) \in \mathfrak{C}$. 
Thus we have a direct sum decomposition 
$K_{\ast}(\mathfrak{C}) = 
K_{\ast}(C^{\ast}(W_{1} \subset M_{1})) \oplus K_{\ast}(C^{\ast}(M_{2}))$.  
Denote by 
\[
q : K_{\ast}(\mathfrak{C}) \to K_{\ast}(C^{\ast}(W_{1} \subset M_{1}))
\] 
the projection onto the first summand, 
which is independent of the choice of 
a unitary $U : L^{2}(M_{1},S_{1}) \to L^{2}(M_{2},S_{2})$ 
with 
$U = \psi^{\ast}$ on $L^{2}(M_{1} \setminus W_{1} , S_{1})$
such that $U$ induces a map 
$Ad(U) : C^{\ast}(M_{1}) \to C^{\ast}(M_{2})$. 
By using the map 
$q : K_{\ast}(\mathfrak{C}) \to K_{\ast}(C^{\ast}(W_{1} \subset M_{1}))$,
we define a coarse relative index 
$\text{c-ind}(D_{1}, D_{2}) \in K_{1}(C^{\ast}(W_{1} \subset M_{1}))$. 

\begin{dfn}
\label{dfn:crelind}
Let $(M_{i},W_{i},D_{i})$ be an odd relative index data over $(M,W)$. 
The coarse relative index is defined to be 
\[
\text{\rm c-ind}(D_{1},D_{2}) = 
q\left( \partial \left[\frac{\chi (D_{1}) + 1}{2}, \frac{\chi (D_{2}) + 1}{2}\right]\right) 
	\in K_{1}(C^{\ast}(W_{1} \subset M_{1})). 
\]
\end{dfn}

Remark that the coarse relative index 
$\text{\rm c-ind}(D_{1}, D_{2}) \in K_{1}(C^{\ast}(W_{1} \subset M_{1}))$ 
is represented by using 
a continuous function $f \in U_{1}(C_{0}(\mathbb{R}))$ 
such that 
the $K_{1}$-class 
$[f]  \in K_{1}(C_{0}(\mathbb{R}))$ is a generator 
which equals $\left[\frac{x-i}{x+i} \right] \in K_{1}(C_{0}(\mathbb{R}))$ 
and the Fourier transform of $f$ 
is compactly supported 
$\mathrm{Supp}(\hat{f}) \subset (-r , r)$: 
$\text{\rm c-ind}(D_{1}, D_{2}) = 
\left[ f(D_{1}) \right] - \left[ U^{\ast} f(D_{2}) U \right]$. 
Note that an operator $f(D_{1}) - U^{\ast}f(D_{2})U$ is 
supported in $N_{r}(W_{1})$ by the proof of 
\cite[Lemma 4.3]{MR3439130}. 
% Roe, Positive curvature, partial vanishing theorems and coarse indices 
We also have 
\[ 
\text{\rm c-ind}(D_{1}, D_{2}) = 
\left[ \frac{D_{1}-i}{D_{1}+i} \right] 
- \left[ U^{\ast} \frac{D_{2}-i}{D_{2}+i} U \right] 
\in K_{1}(C^{\ast}(W_{1} \subset M_{1}).  
\]

Similarly, an even relative index data defines an 
even index class in $K_{0}(C^{\ast}(W_{1} \subset M_{1}))$; 
see 
\cite{MR3439130}. 
% Roe, Positive curvature, partial vanishing theorems and coarse indices 
We do not use the even class in this paper.

\subsection{Coarse relative Toeplitz index}

Let $(M_{i},W_{i},D_{i})$ be an even relative index data over $(M,W)$. 
We define a coarse relative Toeplitz index 
$\text{\rm c-ind}(\phi_{1} , D_{1} , \phi_{2}, D_{2}) 
\in K_{1}(C^{\ast}(W_{1} \subset M_{1}))$ 
of $(M_{i},W_{i},D_{i})$ 
and a function $\phi_{i}$ on $M_{i}$. 

Let $C_{\mathrm{w}}(M)$ be a $C^{\ast}$-algebra 
generated by  $\mathscr{W} = \mathscr{W}(M)$, 
which is the set of smooth and bounded functions 
with which 
gradient is bounded; see 
\cite[Definition 2.1]{seto2}. 
% Seto, Toeplitz operators and the Roe-Higson type index theorem
We define a relative version of this $C^{\ast}$-algebra. 
Denote by $\mathfrak{W}$ 
a $C^{\ast}$-algebra generated by 
$(f_{1}, f_{2}) \in \mathscr{W}(M_{1}) \oplus \mathscr{W}(M_{2})$ 
such that $f_{1} \circ \psi = f_{2}$ 
on the complement of  $W_{2}$, then we have  
\[
\mathfrak{W} 
= 
\{
(f_{1} , f_{2}) \in C_{\mathrm{w}}(M_{1}) \oplus C_{\mathrm{w}}(M_{2}) \,;\, 
f_{1} \circ \psi = f_{2} 
\}. 
\]
%Any element in $K_{1}(\mathfrak{W)}$ 
%is represented by 
%$(\phi_{1} , \phi_{2}) \in GL_{l}(\mathfrak{W})$ 
%such that 
%$\phi_{1} - U^{\ast}\phi_{1}U$ is supported in $N_{\epsilon}(W_{1})$ 
%for any $\epsilon > 0$. 

Let $\chi$ be a chopping function and set $\eta = (1-\chi^{2})^{1/2}$ and 
$\mathcal{D}_{i} = \chi (D_{i}) + \epsilon \eta (D_{i}) \in D^{\ast}(M_{i})$. 
Similar to 
\cite[Proposition 2.3]{seto2},  
% Seto, Toeplitz operators and the Roe-Higson type index theorem
an even relative index data 
$(M_{i}, W_{i}, D_{i})$ defines a 
$KK$-element 
$[\mathfrak{C} , (\chi(D_{1}), \chi(D_{2}))] 
\in KK(\mathfrak{W}, \mathfrak{C})$. 
This class does not depends on the choice of 
a chopping function $\chi$. 

Any $(\phi_{1} , \phi_{2}) \in GL_{l}(\mathfrak{W})$ determines 
a $K_{1}$-class 
$[\phi_{1} , \phi_{2}] \in K_{1}(\mathfrak{W})$. 
By using the Kasparov product 
\[
\hat{\otimes}_{\mathfrak{W}} : 
K_{1}(\mathfrak{W}) \times KK(\mathfrak{W} , \mathfrak{C}) 
\to K_{1}(\mathfrak{C}), 
\] 
we have an element 
$[\phi_{1} , \phi_{2}] \hat{\otimes}_{\mathfrak{W}} 
[\mathfrak{C} , (\chi(D_{1}), \chi(D_{2}))] 
\in K_{1}(\mathfrak{C})$. 

Set 
\[ 
u_{\phi_{i}} = 
\mathcal{D}_{i} 
\begin{bmatrix} \phi_{i} & 0 \\ 0 & 1 \end{bmatrix}
\mathcal{D}_{i} 
\begin{bmatrix} 1 & 0 \\ 0 & \phi_{i}^{-1} \end{bmatrix}. 
\]
Due to Lemma \ref{lem:C*}, \ref{lem:D*} 
and 
\cite[Remark 4.2]{seto2},  
% Seto, Toeplitz operators and the Roe-Higson type index theorem 
we have $(u_{\phi_{1}}, u_{\phi_{2}}) \in GL_{l}(\mathfrak{C})$. 
By the same calculation of the proof of  
\cite[Proposition 4.3]{seto2},  
% Seto, Toeplitz operators and the Roe-Higson type index theorem
we have 
\[
[\phi_{1} , \phi_{2}] \hat{\otimes}_{\mathfrak{W}} 
[\mathfrak{C} , (\chi(D_{1}), \chi(D_{2}))]  = 
[u_{\phi_{1}} , u_{\phi_{2}}] 
\in K_{1}(\mathfrak{C}). 
\]
Note that 
an operator $u_{\phi_{1}} - U^{\ast}u_{\phi_{2}}U$ is 
supported in $N_{2r}(W_{1})$ 
when 
the Fourier transform of $\chi$ is compaclty supported 
$\mathrm{Supp}(\hat{\chi}) \subset (-r , r)$ by the proof of 
\cite[Lemma 4.3]{MR3439130}. 
% Roe, Positive curvature, partial vanishing theorems and coarse indices   
By using the map 
$q : K_{\ast}(\mathfrak{C}) \to K_{\ast}(C^{\ast}(W_{1} \subset M_{1}))$,
we define a coarse relative Toeplitz index as follows. 

\begin{dfn}
\label{dfn:crelToeind}
Let $(M_{i},W_{i},D_{i})$ be an even relative index data over $(M,W)$ and 
$(\phi_{1} , \phi_{2}) \in GL_{l}(\mathfrak{W})$. 
The coarse relative Toeplitz index is defined to be 
\[
\text{\rm c-ind}(\phi_{1} , D_{1}, \phi_{2} , D_{2}) 
	= q([\phi_{1} , \phi_{2}] \hat{\otimes}_{\mathfrak{W}} 
	[\mathfrak{C} , (\chi(D_{1}), \chi(D_{2}))] ) 
	\in K_{1}(C^{\ast}(W_{1} \subset M_{1})). 
\]
\end{dfn}

Similarly, an odd relative index data defines an 
even Toeplitz index class in $K_{0}(C^{\ast}(W_{1} \subset M_{1}))$. 
We do not use the even class in this paper.

\section{The Roe type cyclic $1$-cocycle in the relative setting}
\label{sec:cocycle}

Roe \cite{MR996446} 
% Roe, Partitioning noncompact manifolds and the dual Toeplitz problem
defined a cyclic $1$-cocycle on a complete Riemannian manifold 
partitioned by a \textit{closed} submanifold of codimension $1$. 
In this section, 
we generalize the cocycle to a pair $(M,W)$ 
partitioned by a (possibly non-compact) submanifold of codimension $1$. 

\subsection{Definition of cyclic cocycle}

Let $(M,W)$ be a pair of a complete Riemannian manifold $M$ 
and 
a closed subset $W \subset M$ 
and 
$S \to M$ a Hermitian vector bundle. 
In this subsection, 
we define a cyclic $1$-cocycle on a 
dense subalgebra of a relative ideal $C^{\ast}(W \subset M)$, 
which is a generalization of the Roe cocycle. 

\begin{dfn}
Let $M$ be a complete Riemannian manifold and 
$W \subset M$ a closed subset. 
Assume that the triple $(M^{+}, M^{-}, N)$ satisfies the following conditions: 
\begin{itemize}
\item $M^{+}$ and $M^{-}$ are submanifolds of $M$ of the same dimension as $M$, 
	$\partial M^{+} \neq \emptyset$ and $\partial M^{-} \neq \emptyset$, 
\item $M = M^{+} \cup M^{-}$, 
\item $N$ is a submanifold of $M$ of codimension $1$, 
\item $N = M^{+} \cap M^{-} = -\partial M^{+} = \partial M^{-}$, 
\item $Z = N \cap W$ is compact,  
\item $N$ and $W$ are coarsely transversal, that is, 
	for any $r > 0$ there exists $s > 0$ such that 
	$N_{r}(N) \cap N_{r}(W) \subset N_{s}(Z)$. 
\end{itemize}
Then we call $(M^{+}, M^{-}, N)$ a partition of $(M,W)$. 
We also say $(M,W)$ is partitioned by $(M^{+}, M^{-}, N)$, 
or is partitioned by $N$, for short.  
\end{dfn}

Assume that $(M,W)$ is partitioned by $N$ 
and set $W^{\pm} = M^{\pm} \cap W$. 
Then for any $r > 0$, 
there exists $s > 0$ such that 
$N_{r}(W^{+}) \cap N_{r}(W^{-}) 
\subset 
N_{r}(M^{+}) \cap N_{r}(W) \cap N_{r}(M^{-}) 
\subset 
N_{r}(N) \cap N_{r}(W)
\subset 
N_{s}(Z)$. 

In order to generalize Roe's cyclic $1$-cocycle, 
we firstly prove the following. 
Denote by $\Pi$ the characteristic function of $M^{+}$ 
and set $\Lambda = 2\Pi - 1$. 

\begin{lem}
\label{lem:cycwell}
An operator $[\Lambda , A]$ is of trace class 
on $L^{2}(M,S)$ for any $A \in \mathscr{B}_{W}$. 
\end{lem}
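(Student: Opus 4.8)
The plan is to reduce the trace-class claim for $[\Lambda, A]$ to a statement about operators that are simultaneously controlled, locally traceable, and supported in a \emph{compact} neighborhood of $Z = N \cap W$, which is where the finiteness will come from. It suffices to prove the lemma for a generating operator $A \in \mathscr{B}_{W}$, since $[\Lambda, \cdot]$ is linear and the trace-class operators form an ideal; so fix $A$ controlled with propagation $R > 0$, locally traceable, and supported in $N_{r}(W)$ for some $r > 0$.

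First I would localize the commutator near $N$. Since $\Lambda$ is the $\pm 1$-valued function $2\Pi - 1$ and $A$ has propagation $R$, a standard computation shows $[\Lambda, A]$ is supported (on both sides) in $N_{R}(N)$: indeed, if $\varphi \in C_{c}(M)$ is supported at distance $> R$ from $N$, then $\varphi$ is constant on $\mathrm{Supp}(\varphi)$ equal to $+1$ or $-1$ times the identity after multiplying by $\Lambda$ on that region, so $\varphi[\Lambda, A] = [\Lambda, \varphi A] - [\Lambda, \varphi] A$, and the propagation bound forces each term to vanish; similarly on the right. Combined with the fact that $A$ itself is supported in $N_{r}(W)$, we get that $[\Lambda, A]$ is supported in $N_{R}(N) \cap N_{r}(W)$ (up to enlarging $r$ to $\max\{r, R\}$). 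Now invoke the coarse transversality hypothesis: there exists $s > 0$ with $N_{R}(N) \cap N_{R}(W) \subset N_{s}(Z)$, so $[\Lambda, A]$ is supported in the compact set $N_{s}(Z)$.

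Next I would cash in local traceability on this compact support. Choose $\varphi \in C_{c}(M)$ with $\varphi \equiv 1$ on $N_{s}(Z)$; then $[\Lambda, A] = \varphi [\Lambda, A] \varphi$. Write $B = [\Lambda, A]$ and note $B = \varphi B \varphi$, so it would be enough to know $\varphi B \psi$ is trace class for $\psi = \varphi$. This is \emph{not} quite immediate from local traceability of $A$ alone, because $\Lambda$ is not compactly supported; but $\varphi [\Lambda, A] \varphi = \varphi \Lambda A \varphi - \varphi A \Lambda \varphi$, and I can insert a cutoff $\tilde\varphi \in C_{c}(M)$ with $\tilde\varphi \equiv 1$ on an $R$-neighborhood of $\mathrm{Supp}(\varphi)$, using the propagation bound of $A$ to replace $A\varphi$ by $\tilde\varphi A \varphi$ and $\varphi A$ by $\varphi A \tilde\varphi$. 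Then each term becomes $\varphi \Lambda \tilde\varphi \cdot (\tilde\varphi A \varphi)$-type, i.e. a bounded operator times a compactly-truncated $\tilde\varphi A \varphi$, and $\tilde\varphi A \varphi$ is trace class because $A$ is locally traceable and $\tilde\varphi, \varphi \in C_{c}(M)$. Since trace class is an ideal, $\varphi[\Lambda, A]\varphi$ is trace class, hence so is $[\Lambda, A]$.

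The main obstacle is the second paragraph: making precise that the commutator $[\Lambda, A]$ is genuinely supported near $N$ \emph{and} near $W$ at the same time, so that coarse transversality applies and delivers a compact support set. The propagation argument for "supported near $N$" is routine but must be written carefully because $\Lambda$ is a bounded multiplication operator, not a compactly supported one; once that is in hand, the rest is a bookkeeping exercise with cutoff functions and the ideal property of trace-class operators.
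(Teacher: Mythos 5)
Your proposal is correct and is essentially the paper's argument: use the propagation bound and the coarse transversality of $N$ and $W$ to localize $[\Lambda , A]$ to a compact neighborhood $N_{s}(Z)$ of $Z$, then conclude trace class from local traceability of $A$. The paper streamlines the localization by writing $[\Lambda , A] = 2(\Pi A (1-\Pi) - (1-\Pi)A\Pi)$, so that each summand is visibly supported near both $W^{+}$ and $W^{-}$ (hence, by the transversality remark preceding the lemma, in a compact neighborhood of $Z$), which avoids your separate ``supported near $N$'' cutoff argument and the bookkeeping needed to intersect the two support conditions.
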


\begin{proof}
Assume that propagation of $A \in \mathscr{B}_{W}$ is less than $R$ 
and $A$ is supported in $N_{r}(W)$. 
Take $s > 0$ such that 
$N_{r+R}(W^{+}) \cap N_{r+R}(W^{-}) \subset N_{s}(Z)$. 
Note that operators 
$\Pi A (1 - \Pi)$ 
and 
$(1 - \Pi) A \Pi$ are supported in 
$N_{r+R}(W^{+}) \cap N_{r+R}(W^{-}) \subset N_{s}(Z)$. 
Since $A$ is locally tracable and $N_{s}(Z)$ is compact, 
these operators $\Pi A (1 - \Pi)$ and $(1 - \Pi) A \Pi$ are 
of trace class. 
Thus an operator 
\[
[\Lambda , A] = 
[2\Pi , A] = 2(\Pi A (1-\Pi) - (1 - \Pi)A\Pi)
\]
is of trace class. 
\end{proof}

By Lemma \ref{lem:cycwell}, 
the following bilinear map %(Connes' Chern character) 
is well-defined and 
is cyclic $1$-cocycle. 

\begin{dfn}
\label{dfn:relRoe}
Define a map $\zeta : \mathscr{B}_{W} \times \mathscr{B}_{W} \to \mathbb{C}$ 
by 
\[
\zeta (A,B) = \frac{1}{4}\mathrm{Tr}(\Lambda [\Lambda , A][\Lambda , B]). 
\]
\end{dfn}

\begin{prp}
The bilinear map 
$\zeta : \mathscr{B}_{W} \times \mathscr{B}_{W} \to \mathbb{C}$ 
in Definition \ref{dfn:relRoe} 
is cyclic $1$-cocycle on $\mathscr{B}_{W}$. 
\end{prp}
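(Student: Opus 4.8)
The plan is to verify the two defining properties of a cyclic $1$-cocycle for the bilinear map $\zeta$: the cocycle identity (Hochschild-type coboundary condition for the cyclic complex) and the cyclicity condition $\zeta(A,B) = -\zeta(B,A)$. Throughout, the key point that makes everything work is Lemma \ref{lem:cycwell}, which guarantees that $[\Lambda,A]$ is trace class for $A\in\mathscr{B}_W$; since $\Lambda$ is a bounded operator (it is $\pm 1$-valued) and $\mathscr{B}_W$ is an algebra closed under the relevant operations, every product appearing below that contains at least one factor of the form $[\Lambda,\,\cdot\,]$ lies in the trace ideal, so $\mathrm{Tr}$ is defined and the trace property $\mathrm{Tr}(ST) = \mathrm{Tr}(TS)$ may be freely used whenever one of $S,T$ is trace class and the other bounded.

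First I would record the algebraic identity $\Lambda^2 = 1$, hence $[\Lambda,\Lambda^2]=0$ and, more usefully, $\Lambda[\Lambda,A] = -[\Lambda,A]\Lambda$ for any $A$: indeed $\Lambda[\Lambda,A] = \Lambda^2 A - \Lambda A\Lambda = A - \Lambda A \Lambda$ while $[\Lambda,A]\Lambda = \Lambda A\Lambda - A\Lambda^2 = \Lambda A\Lambda - A$, so the two differ by a sign. This anticommutation relation is the engine of the whole computation. For \emph{cyclicity}, I would write $\zeta(A,B) = \tfrac14\mathrm{Tr}(\Lambda[\Lambda,A][\Lambda,B])$, move the leading $\Lambda$ past $[\Lambda,A]$ using the relation above to get $-\tfrac14\mathrm{Tr}([\Lambda,A]\Lambda[\Lambda,B])$, then apply the trace property to cycle $[\Lambda,A]$ (which is trace class) to the back: $-\tfrac14\mathrm{Tr}(\Lambda[\Lambda,B][\Lambda,A])$. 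Comparing with $\zeta(B,A) = \tfrac14\mathrm{Tr}(\Lambda[\Lambda,B][\Lambda,A])$ gives $\zeta(A,B) = -\zeta(B,A)$. For the \emph{cocycle identity} $b\zeta = 0$, i.e.
\[
\zeta(AB,C) - \zeta(A,BC) + \zeta(CA,B) = 0
\qquad\text{for all }A,B,C\in\mathscr{B}_W,
\]
I would expand each term using the Leibniz rule $[\Lambda,XY] = [\Lambda,X]Y + X[\Lambda,Y]$, substitute into the trace expressions, and collect. Using $\Lambda^2=1$ to simplify factors $\Lambda[\Lambda,X]\Lambda = [\Lambda,X]$-type reductions and repeatedly invoking the trace property (each monomial contains a trace-class commutator factor), the six resulting monomials cancel in pairs. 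This is the standard computation showing that $X\mapsto \tfrac14\mathrm{Tr}(\Lambda[\Lambda,X_0][\Lambda,X_1])$ is the cyclic cocycle associated to the idempotent-like element $\Pi$ (equivalently, that $\Lambda$ defines a Fredholm module / odd cycle), specialized to the ideal $\mathscr{B}_W$ where trace-class-ness is supplied by Lemma \ref{lem:cycwell} rather than by compactness of the whole manifold.

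The main obstacle is not the algebra — it is routine — but the \emph{bookkeeping of which operators are trace class} at each intermediate step, since $\mathrm{Tr}$ is only a genuine linear functional on the trace ideal and one cannot split $\mathrm{Tr}$ of a sum unless each summand is individually trace class. I would therefore be careful to note, before expanding, that for any $X,Y,Z\in\mathscr{B}_W$ every monomial of the form $\Lambda^{a_0}[\Lambda,\cdot]^{b_0}\cdots$ occurring after the Leibniz expansion still contains at least one commutator $[\Lambda,\cdot]$, hence is trace class by Lemma \ref{lem:cycwell} together with the fact that $\mathscr{B}_W$ is closed under multiplication (so $AB$, $BC$, $CA\in\mathscr{B}_W$) and that trace-class operators form an ideal in the bounded operators. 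Once that is in place, the pairwise cancellation can be carried out termwise. I expect the proof to occupy only a short paragraph after these preliminaries.
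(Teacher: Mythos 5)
Your proposal is correct and follows essentially the same route as the paper: the paper's proof likewise rests on the anticommutation relation $\Lambda[\Lambda,A]=-[\Lambda,A]\Lambda$, the Leibniz rule $[\Lambda,AB]=A[\Lambda,B]+[\Lambda,A]B$, and the trace property, with trace-class-ness of all relevant terms supplied by Lemma \ref{lem:cycwell} (the paper simply defers the routine expansion to Roe's original argument). Your additional care about which monomials are individually trace class is a sound and welcome elaboration, not a deviation.
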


\begin{proof}
By equalities 
$\Lambda [\Lambda , A] = - [\Lambda , A]\Lambda$ 
and 
$[\Lambda , AB] = A[\Lambda , B] + [\Lambda , A]B$ 
and trace property imply this proposition. 
This is essentially the same as 
Roe's proof of \cite[Proposition 1.6]{MR996446}. 
\end{proof}

By Lemma \ref{lem:cycwell}, 
a pair $(L^{2}(M,S) , \Lambda)$ is 
a Fredholm module over $C^{\ast}(W \subset M) = \overline{\mathscr{B}_{W}}$. 
Thus a Banach algebra 
\[
\mathscr{A}_{W} = 
\{
T \in C^{\ast}(W \subset M) \,;\, 
[\Lambda , T] \text{ is of trace class.}
\}
\]
with norm $\| T \|_{\mathscr{A}_{W}} = \| T \| + \| [\Lambda , T] \|_{1}$ 
is holomorphically closed in $C^{\ast}(W \subset M)$ 
by 
\cite[p.92 Proposition 3]{MR823176},  
% Connes, Alain, Noncommutative differential geometry 
here $\| \cdot \|_{1}$ is the trace norm. 
Moreover, 
$\mathscr{A}_{W}$ is dense in $C^{\ast}(W \subset M)$. 
Thus the inclusion $i : \mathscr{A}_{W} \to C^{\ast}(W \subset M)$ 
induces an isomorphism of $K$-theory 
$i_{\ast} : K_{\ast}(\mathscr{A}_{W}) \cong K_{\ast}(C^{\ast}(W \subset M))$. 
Since
$\zeta$ can be extended to $\mathscr{A}_{W}$, 
we have the following additive map 
by Connes' pairing of $K$-theory with cyclic cohomology. 

\begin{dfn}
\cite[p.109]{MR823176} 
% Connes, Alain, Noncommutative differential geometry
Define the map 
\[ \zeta_{\ast} : K_{1}(C^{\ast}(W \subset M)) \to \mathbb{C} \]
by $\zeta_{\ast} ([u]) = \frac{1}{8\pi i}\sum_{i,j}\zeta ((u^{-1})_{ji}, u_{ij})$, 
where we assume $[u]$ is represented by an element 
$u \in GL_{l}(\mathscr{A}_{W})$
and $u_{ij}$ is the $(i,j)$-component of $u$. 
We note that this is Connes' pairing of cyclic cohomology with $K$-theory, 
and $1/8\pi i$ is a constant multiple in Connes' pairing. 
\end{dfn}

By standard calculation 
implies the following; see, for instance,  
\cite[Proposition 1.13]{seto2}.  
% Seto, Toeplitz operators and the Roe-Higson type index theorem 

\begin{prp}
\label{prp:paind}
For any $u \in GL_{l}(C^{\ast}(W \subset M))$, one has 
\[ \zeta_{\ast}([u])  
	= -\frac{1}{8 \pi i}\ind (\Pi u \Pi : \Pi (L^{2}(M,S))^{l} \to \Pi (L^{2}(M,S))^{l}). \]
\end{prp}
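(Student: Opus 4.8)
The plan is to compute the Connes pairing $\zeta_*([u])$ directly from the definition and recognize the result as (minus a constant times) a Fredholm index. The key identity to exploit is that $\Pi = (\Lambda+1)/2$ is an honest projection on $L^2(M,S)$, so for a unitary (or invertible) $u$ the compressed operator $\Pi u \Pi$ on $\Pi(L^2(M,S))^l$ becomes Fredholm precisely because $[\Lambda, u]$, hence $[\Pi, u]$, is of trace class by Lemma \ref{lem:cycwell} (after passing to the dense holomorphically closed subalgebra $\mathscr{A}_W$ where we may assume $u \in GL_l(\mathscr{A}_W)$); in particular $\Pi u \Pi u^{-1} \Pi - \Pi$ and $\Pi u^{-1} \Pi u \Pi - \Pi$ are trace-class, so $\Pi u \Pi$ is Fredholm with a parametrix given by $\Pi u^{-1}\Pi$.

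First I would reduce to the case $l=1$ for notational clarity and reinstate matrix indices and the sum over $i,j$ only at the end — the argument is entirely formal in that respect. Next I would rewrite $\zeta(u^{-1}, u) = \tfrac14 \mathrm{Tr}(\Lambda[\Lambda,u^{-1}][\Lambda,u])$ in terms of $\Pi$: using $\Lambda = 2\Pi - 1$ and $[\Lambda, A] = 2[\Pi, A]$, one gets $\zeta(u^{-1},u) = \mathrm{Tr}((2\Pi-1)[\Pi,u^{-1}][\Pi,u])$. Then I would expand the commutators, using $\Pi^2 = \Pi$ repeatedly and the cyclicity of the trace (legitimate here because each term, after the expansion, is a product containing at least one trace-class factor — again Lemma \ref{lem:cycwell}). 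The standard computation collapses this to something of the form $\mathrm{Tr}(\Pi u \Pi u^{-1}\Pi - \Pi) - \mathrm{Tr}(\Pi u^{-1}\Pi u\Pi - \Pi)$, i.e. the difference of the traces of $\Pi - \Pi u^{-1}\Pi u\Pi$ and $\Pi - \Pi u\Pi u^{-1}\Pi$ up to sign, which by the Calder\'on formula for the Fredholm index equals $-\mathrm{index}(\Pi u \Pi)$ acting on $\Pi(L^2(M,S))$ (the compression of $u$ to the range of $\Pi$, which is $M^+$).

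Finally I would multiply by the constant $\tfrac{1}{8\pi i}$ from the definition of $\zeta_*$, reinstate the sum over matrix entries $i,j$ so that $\Pi u \Pi$ is understood as the operator on $\Pi(L^2(M,S))^l$, and conclude
\[
\zeta_*([u]) = -\frac{1}{8\pi i}\,\mathrm{index}\bigl(\Pi u \Pi : \Pi(L^2(M,S))^l \to \Pi(L^2(M,S))^l\bigr),
\]
as claimed. The statement for general $u \in GL_l(C^*(W\subset M))$ rather than $u \in GL_l(\mathscr{A}_W)$ then follows because $\mathscr{A}_W$ is dense and holomorphically closed, so $K_1(\mathscr{A}_W)\cong K_1(C^*(W\subset M))$, and both sides of the formula are invariants of the $K_1$-class (the left by construction of Connes' pairing, the right by homotopy invariance of the Fredholm index).

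The main obstacle is purely bookkeeping: keeping track of which summands in the expansion of $\mathrm{Tr}((2\Pi-1)[\Pi,u^{-1}][\Pi,u])$ are individually trace-class so that cyclic permutation of the trace is justified term by term, and arranging the algebra so the answer cleanly telescopes into the Calder\'on index expression with the correct sign. Since this is exactly the computation carried out for the non-relative Roe cocycle (and the excerpt explicitly points to \cite[Proposition 1.13]{seto2}), the only genuinely new input is the observation that Lemma \ref{lem:cycwell}, proved here for $\mathscr{B}_W$ using coarse transversality and compactness of $Z = N \cap W$, supplies precisely the trace-class control needed to run the identical argument in the relative setting; everything else transfers verbatim.
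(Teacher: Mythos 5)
Your proposal is correct and is essentially the argument the paper intends: the paper gives no proof beyond citing the "standard calculation" of \cite[Proposition 1.13]{seto2}, and that calculation is exactly your expansion of $\mathrm{Tr}(\Lambda[\Lambda,u^{-1}][\Lambda,u])$ into the Calder\'on index formula for $\Pi u\Pi$, combined with the passage through the holomorphically closed dense subalgebra $\mathscr{A}_{W}$. The only blemish is the sign in your intermediate telescoped expression (which you flag yourself as "up to sign"); the final formula and the justification that both sides depend only on the $K_{1}$-class are right.
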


\begin{rmk}
\label{rmk:K1supp}
By an isomorphism \cite[Proposition 4.3.12]{MR3153339}
% Siegel, thesis
%\[
%K_{\ast}(C^{\ast}(W \subset M)) \cong \varinjlim K_{\ast}(C^{\ast}(N_{n}(W))), 
%\]
\[
K_{\ast}(C^{\ast}(W)) \cong K_{\ast}(C^{\ast}(W \subset M))
\] 
induced by the inclusion $W \to M$, 
any $x \in K_{1}(C^{\ast}(W \subset M))$ 
can be represented by $u \in GL_{l}(C^{\ast}(W \subset M))$ 
such that an operator $u - 1$ is supported in $N_{r}(W)$ 
for arbitrary small $r > 0$. 
Denote by $p_{r}$ the characteristic function of $N_{r}(W)$. 
Under the notations, we have 
\[
\zeta_{\ast}(x) = 
-\frac{1}{8 \pi i}
\ind (\Pi p_{r} u p_{r} \Pi 
: p_{r} \Pi (L^{2}(M,S))^{l} \to p_{r} \Pi (L^{2}(M,S))^{l}). 
\]
\end{rmk}

\subsection{Some properties}

Let $(M,W)$ be a pair of a complete Riemannian manifold $M$ 
and 
a closed subset $W \subset M$ 
and 
$S \to M$ a Hermitian vector bundle. 
In this subsection, 
we prove properties of the map $\zeta_{\ast}$ 
which we use. 
Firstly, 
we prove ``cobordism'' invariance of $\zeta_{\ast}$. 

\begin{lem}
\label{lem:cobor}
Let $N$ and $N'$ be two partitions of $(M,W)$. 
Denote by $\zeta$ and $\zeta'$ 
the cyclic cocycle introduced in Definition \ref{dfn:relRoe} 
by using the partitions $N$ and $N'$, respectively. 
Assume that $N' \subset N_{r}(N)$ for some $r>0$. 
Then we have 
\[
\zeta_{\ast} = \zeta'_{\ast} : K_{1}(C^{\ast}(W \subset M)) \to \mathbb{C}. 
\]
\end{lem}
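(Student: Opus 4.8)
The plan is to show that the difference $\zeta_{\ast} - \zeta'_{\ast}$ vanishes by exhibiting the two cocycles $\zeta$ and $\zeta'$ as cohomologous in the cyclic cohomology of $\mathscr{A}_{W}$, which by Connes' pairing forces the induced maps on $K_{1}$ to agree. Write $\Lambda = 2\Pi - 1$ and $\Lambda' = 2\Pi' - 1$ for the two gradings coming from the partitions $N$ and $N'$. The key local observation is that, since $N' \subset N_{r}(N)$ and both $N$ and $W$ (hence $N'$ and $W$) are coarsely transversal, the difference $\Pi - \Pi'$ is the characteristic function of a region that, when intersected with a neighbourhood of $W$ of bounded size, sits inside a bounded neighbourhood of the compact set $Z = N \cap W$ (and similarly $Z' = N' \cap W$, which is also compact and lies near $Z$). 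Consequently, for $A \in \mathscr{B}_{W}$ with propagation less than $R$ and support in $N_{\rho}(W)$, the operators $(\Pi - \Pi')A$ and $A(\Pi - \Pi')$ are locally traceable operators supported in a compact set, hence of trace class; thus $\Lambda - \Lambda'$ is ``trace-class modulo $\mathscr{B}_{W}$'' in the precise sense that multiplication by it against any element of $\mathscr{B}_{W}$, on either side, lands in the trace ideal.

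Next I would write down an explicit transgression $0$-cochain. Following Roe's argument and the standard homotopy-of-Fredholm-modules formula, consider the linear functional $\tau(A) = \tfrac{1}{4}\mathrm{Tr}\big((\Lambda - \Lambda')\,[\Lambda + \Lambda', A]\big)$ on $\mathscr{A}_{W}$ (or an equivalent expression obtained from interpolating $\Lambda_{t} = (1-t)\Lambda' + t\Lambda$ and integrating the derivative of the Chern character); the trace-class estimates above guarantee $\tau$ is well-defined and continuous in the $\mathscr{A}_{W}$-norm. I would then verify by a direct algebraic manipulation — using $\Lambda[\Lambda,A] = -[\Lambda,A]\Lambda$, the Leibniz rule $[\Lambda, AB] = A[\Lambda,B] + [\Lambda,A]B$ (and the same for $\Lambda'$), and the trace property, exactly as in the proof that $\zeta$ is a cocycle — that the Hochschild/cyclic coboundary $b\tau$ equals $\zeta - \zeta'$. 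All manipulations take place inside the trace ideal because each individual commutator $[\Lambda, A]$, $[\Lambda', A]$ is already trace class for $A \in \mathscr{A}_{W}$, so rearranging products under the trace is legitimate.

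Finally, since $\mathscr{A}_{W}$ is dense and holomorphically closed in $C^{\ast}(W \subset M)$, the inclusion induces an isomorphism on $K$-theory, and Connes' pairing descends to a pairing of $HC^{1}(\mathscr{A}_{W})$ with $K_{1}(C^{\ast}(W \subset M))$; cohomologous cocycles pair identically, so $\zeta_{\ast} = \zeta'_{\ast}$. Alternatively, and perhaps more cleanly, I would invoke Proposition \ref{prp:paind}: for $u \in GL_{l}(\mathscr{A}_{W})$ one has $\zeta_{\ast}([u]) = -\tfrac{1}{8\pi i}\,\mathrm{ind}(\Pi u \Pi)$ and $\zeta'_{\ast}([u]) = -\tfrac{1}{8\pi i}\,\mathrm{ind}(\Pi' u \Pi')$, and then show these two Fredholm indices coincide because $\Pi - \Pi'$ multiplied by $u - 1$ (supported near $W$, by Remark \ref{rmk:K1supp}) is compact — indeed trace class — so $\Pi u \Pi$ and $\Pi' u \Pi'$ differ by a compact perturbation after conjugating by a suitable partial isometry identifying the ranges of $\Pi$ and $\Pi'$ off a compact set. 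The main obstacle I anticipate is the bookkeeping in the first approach: making the trace-class estimate on $(\Pi - \Pi')A$ fully rigorous requires carefully combining the propagation bound, the support-near-$W$ condition, and coarse transversality to pin down a single compact set containing the support, and then checking that the transgression cochain $\tau$ genuinely coboundaries to $\zeta - \zeta'$ rather than to $\zeta - \zeta'$ plus an extra term — a computation that is routine in spirit but where sign and factor errors are easy. For this reason the index-theoretic shortcut via Proposition \ref{prp:paind} and Remark \ref{rmk:K1supp} is likely the route I would actually write up.
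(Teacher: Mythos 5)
Your ``index-theoretic shortcut'' --- representing $x$ by $u=v+1$ with $v$ supported near $W$ (Remark \ref{rmk:K1supp}), applying Proposition \ref{prp:paind}, and observing that $(\Pi-\Pi')v$ is compact because coarse transversality confines its support to a bounded neighbourhood of the compact set $Z$ --- is exactly the paper's proof, and since you state this is the route you would actually write up, the proposal is correct and takes essentially the same approach. (Only caveat: for $v$ in the $C^{\ast}$-closure one gets compactness rather than trace class of $(\Pi-\Pi')v$, but compactness is all that is needed for the index equality.)
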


\begin{proof}
Denote by $\Pi'$ the characteristic function of $M^{+}{}'$ 
and set $\phi = \Pi - \Pi'$. 
Take any $x \in K_{1}(C^{\ast}(W \subset M))$. 
$x$ can be represented  by $u = v+1 \in GL_{l}(C^{\ast}(W \subset M))$ 
such that $v \in M_{l}(C^{\ast}(W \subset M))$ 
is supported near $W$ as in Remark \ref{rmk:K1supp}. 
Then we have 
\[
\zeta_{\ast}(x) = -\frac{1}{8\pi i}\ind (\Pi u \Pi \text{ on } \Pi (L^{2}(M,S))^{l}) 
= -\frac{1}{8\pi i}\ind (\Pi v + 1)
\]
and 
\[
\zeta'_{\ast}(x) = -\frac{1}{8\pi i}\ind (\Pi' u \Pi' \text{ on } \Pi' (L^{2}(M,S))^{l}) 
= -\frac{1}{8\pi i}\ind (\Pi' v + 1)   
\]
since operators $[\Pi , v]$ and $[\Pi' , v]$ are compact. 
By the way, properties 
$\mathrm{Supp}(\phi) \subset N_{r}(N)$, 
$v$ is locally compact 
and $v$ is supported near $W$ 
imply an operator 
$(\Pi v + 1) - (\Pi'v + 1) = \phi v$ 
is compact. 
Thus we have 
$\zeta_{\ast}(x) = \zeta'_{\ast}(x)$ for any $x \in K_{1}(C^{\ast}(W \subset M))$. 
\end{proof}

Secondly, we shall prove an analogue of Higson's Lemma  
\cite[Lemma 3.1]{MR1113688}. 
% Higson, Nigel, A note on the cobordism invariance of the index

\begin{lem}
\label{lem:Higson}
Let $(M , W)$ and $(M' , W')$ 
be two pairs which is partitioned by $N$ and $N'$, respectively, 
and 
$S \to M$ and $S' \to M'$ two Hermitian vector bundles. 
Let $\Pi$ and $\Pi'$ be the characteristic function of $M^{+}$ and 
$M'^{+}$, respectively. 
We assume that there exists an isometry 
$\gamma : M'^{+} \to M^{+}$ which lifts
an isomorphism $\gamma^{\ast} : S|_{M^{+}} \to S'|_{M'^{+}}$. 
We denote the Hilbert space isometry defined by $\gamma^{\ast}$ 
by the same letter $\gamma^{\ast} : \Pi(L^{2}(M, S)) \to \Pi'(L^{2}(M', S'))$. 
Take $u \in GL_{l}(C^{\ast}(W \subset M))$ and 
$u' \in GL_{l}(C^{\ast}(W' \subset M'))$ 
such that 
$\gamma^{\ast}u\Pi \sim \Pi'u'\gamma^{\ast}$. 
Then one has % $\ind (\Pi_{1}u_{1}\Pi_{1}) = \ind (\Pi_{2}u_{2}\Pi_{2})$. 
$\zeta_{\ast}([u]) = \zeta'_{\ast}([u'])$. 

Similarly, if there exists an isometry $\gamma : M'^{-} \to M^{-}$ which lifts 
an isomorphism $\gamma^{\ast} : S'|_{M'^{-}} \to S|_{M^{-}}$ and 
$\gamma^{\ast}u\Pi \sim \Pi'u'\gamma^{\ast}$, 
then one has % $\ind (\Pi_{1}u_{1}\Pi_{1}) = \ind (\Pi_{2}u_{2}\Pi_{2})$. 
$\zeta_{\ast}([u]) = \zeta'_{\ast}([u'])$. 
\end{lem}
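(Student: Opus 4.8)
The plan is to translate both sides into Fredholm indices of Toeplitz operators via Proposition \ref{prp:paind} and then to transport one index onto the other by conjugating with the Hilbert space isometry $\gamma^{\ast}$. By Proposition \ref{prp:paind} we have $\zeta_{\ast}([u]) = -\frac{1}{8\pi i}\ind(T_{u})$, where $T_{u} = \Pi u \Pi$ is regarded as a Fredholm operator on $\Pi(L^{2}(M,S))^{l}$, and likewise $\zeta'_{\ast}([u']) = -\frac{1}{8\pi i}\ind(T_{u'})$ with $T_{u'} = \Pi' u' \Pi'$ on $\Pi'(L^{2}(M',S'))^{l}$. So it suffices to prove $\ind(T_{u}) = \ind(T_{u'})$.

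First I would record the algebraic properties of $\gamma^{\ast}$. The Hilbert space isometry $\gamma^{\ast} : \Pi(L^{2}(M,S)) \to \Pi'(L^{2}(M',S'))$ is a unitary; extending it by $0$ on the orthogonal complement of $\Pi(L^{2}(M,S))$, we may regard $\gamma^{\ast}$ as a partial isometry $L^{2}(M,S) \to L^{2}(M',S')$ with $\gamma^{\ast}\Pi = \gamma^{\ast} = \Pi'\gamma^{\ast}$, $(\gamma^{\ast})^{\ast}\gamma^{\ast} = \Pi$ and $\gamma^{\ast}(\gamma^{\ast})^{\ast} = \Pi'$. Since the Fredholm index is invariant under conjugation by a unitary between Hilbert spaces, $\ind(T_{u}) = \ind(\gamma^{\ast} T_{u} (\gamma^{\ast})^{\ast})$, the right-hand side being regarded as an operator on $\Pi'(L^{2}(M',S'))^{l}$.

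Next I would feed in the hypothesis $\gamma^{\ast} u \Pi \sim \Pi' u' \gamma^{\ast}$, where $\sim$ denotes equality modulo compact operators. Using $\gamma^{\ast}\Pi = \gamma^{\ast}$ and $\gamma^{\ast}(\gamma^{\ast})^{\ast} = \Pi'$, we compute
\[
\gamma^{\ast} T_{u} (\gamma^{\ast})^{\ast}
= \gamma^{\ast}\Pi u \Pi (\gamma^{\ast})^{\ast}
= (\gamma^{\ast} u \Pi)(\gamma^{\ast})^{\ast}
\sim (\Pi' u' \gamma^{\ast})(\gamma^{\ast})^{\ast}
= \Pi' u' \Pi' = T_{u'},
\]
so $\gamma^{\ast} T_{u} (\gamma^{\ast})^{\ast}$ and $T_{u'}$ have the same Fredholm index; combining with the previous step gives $\ind(T_{u}) = \ind(T_{u'})$, hence $\zeta_{\ast}([u]) = \zeta'_{\ast}([u'])$. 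This is essentially Higson's argument from \cite[Lemma 3.1]{MR1113688}. For the second assertion, the roles of $\Pi$ and $1-\Pi$ (resp.\ $\Pi'$ and $1-\Pi'$) are interchanged: since $u \in GL_{l}(C^{\ast}(W \subset M))$ makes $[\Pi, u]$ compact (by Lemma \ref{lem:cycwell}) and $u$ is invertible on $L^{2}(M,S)^{l}$, one has $u \sim \Pi u \Pi \oplus (1-\Pi)u(1-\Pi)$ modulo compacts, whence $\ind(\Pi u \Pi) = -\ind((1-\Pi)u(1-\Pi))$, and similarly for $u'$; the isometry on the minus-pieces then yields $\ind((1-\Pi)u(1-\Pi)) = \ind((1-\Pi')u'(1-\Pi'))$ by the same conjugation argument.

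The argument is largely formal once Proposition \ref{prp:paind} is available; the step requiring the most care is the bookkeeping for the partial isometry $\gamma^{\ast}$ — in particular checking that the relation $\gamma^{\ast} u \Pi \sim \Pi' u' \gamma^{\ast}$ is meaningful and manipulates as in the displayed computation after $\gamma^{\ast}$ is extended by zero — together with the (already established) Fredholmness of $T_{u}$ and $T_{u'}$, which is exactly what makes $\zeta_{\ast}$ and $\zeta'_{\ast}$ defined on these classes. If desired, Remark \ref{rmk:K1supp} lets one first replace $u$ by a representative with $u-1$ supported in a small $N_{r}(W)$, which renders the compactness of the relevant cross terms and the comparison on $M^{+}$ versus $M^{-}$ transparent; this is not strictly needed for the argument above.
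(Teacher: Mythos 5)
Your proposal is correct and is essentially the paper's own argument: the paper also reduces to comparing $\ind(\Pi u\Pi)$ with $\ind(\Pi'u'\Pi')$ via Proposition \ref{prp:paind}, the only cosmetic difference being that it extends $\gamma^{\ast}\Pi$ to an invertible operator $V=\gamma^{\ast}\Pi+v(1-\Pi)$ on all of $L^{2}(M,S)$ and checks $V((1-\Pi)+\Pi u\Pi)\sim((1-\Pi')+\Pi'u'\Pi')V$, whereas you conjugate the compression directly by the partial isometry. Your handling of the $M^{-}$ case via $\ind(\Pi u\Pi)=-\ind((1-\Pi)u(1-\Pi))$ (using invertibility of $u$ and compactness of $[\Pi,u]$) is a valid way to fill in the paper's ``similarly.''
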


\begin{proof}
It suffices to show the case when $l=1$. 
Let $v : (1-\Pi)(L^{2}(M, S)) \to (1-\Pi')(L^{2}(M', S'))$ 
be any invertible operator. 
Then $V = \gamma^{\ast}\Pi + v(1-\Pi) 
: L^{2}(M , S) \to L^{2}(M' , S')$ is also invertible. 
Hence we obtain 
\begin{align*}
& \; V((1-\Pi) + \Pi u\Pi) - ((1-\Pi') + \Pi ' u'\Pi')V \\
%=& \; -V\Pi_{1} + \Pi_{2}V + V\Pi_{1}u_{1}\Pi_{1}
%	- \Pi_{2}u_{2}\Pi_{2}V \\ 
=& \; \gamma^{\ast}\Pi u \Pi - \Pi' u' \Pi' \gamma^{\ast} 
\sim  \gamma^{\ast}u \Pi - \Pi' u \gamma^{\ast} \sim 0. 
\end{align*}
Therefore, we obtain $\zeta_{\ast}([u]) = \zeta'_{\ast}([u'])$ 
since $V$ is an invertible operator and one has 
$-8\pi i \zeta_{\ast}([u]) = 
\ind (\Pi u\Pi) = \ind ((1-\Pi) + \Pi u \Pi )$ 
and 
$-8\pi i \zeta'_{\ast}([u']) = 
%\ind (\Pi' u'\Pi') = 
\ind ((1-\Pi') + \Pi' u' \Pi' )$.  
\end{proof}

Let $(M , W)$ and $(M' , W')$ 
be two pairs which is partitioned by $N$ and $N'$, respectively, 
and 
$S \to M$ and $S' \to M'$ two Hermitian vector bundles. 
Let $\Pi$ and $\Pi'$ be the characteristic function of $M^{+}$ and 
$M'^{+}$, respectively. 
We assume that there exists an isometry 
$\gamma : N_{r}(W') \to N_{r}(W)$ 
which lifts an isomorphism 
$\gamma^{\ast} : S_{1}|_{N_{r}(W)} \to S_{2}|_{N_{r}(W')}$. 
We define an additive map 
$\Gamma : K_{1}(C^{\ast}(W \subset M)) 
\to K_{1}(C^{\ast}(W' \subset M'))$ 
as follows. 
Take any $x \in K_{1}(C^{\ast}(W \subset M))$. 
$x$ can be represented  by $u = v+1 \in GL_{l}(C^{\ast}(W \subset M))$ 
such that $v \in M_{l}(C^{\ast}(W \subset M))$ 
is supported in $N_{r}(W)$; see Remark \ref{rmk:K1supp}. 
Then we have 
$\gamma^{\ast}u(\gamma^{\ast})^{-1} 
= \gamma^{\ast}v(\gamma^{\ast})^{-1} + 1 \in GL_{l}(C^{\ast}(W' \subset M'))$, 
here 
$\gamma^{\ast}v(\gamma^{\ast})^{-1}  \in M_{l}(C^{\ast}(W' \subset M'))$ 
is supported in $N_{r}(W')$. 
The $K_{1}$-class of $\gamma^{\ast}u(\gamma^{\ast})^{-1} $ 
does not depend on the choice of such an above $u$. 
Set 
$\Gamma (x) 
= [\gamma^{\ast}u(\gamma^{\ast})^{-1}] \in K_{1}(C^{\ast}(W' \subset M'))$. 

\begin{lem}
\label{lem:locW}
Moreover, we assume that an isometry $\gamma : N_{r}(W') \to N_{r}(W)$ 
preserves partitions, that is, $\gamma$ satisfies 
$\Pi' = \Pi \circ \gamma$ on $N_{r}(W')$. 
Then we have 
$\zeta'_{\ast} \circ \Gamma = \zeta_{\ast}$ on $K_{1}(C^{\ast}(W \subset M))$. 
\end{lem}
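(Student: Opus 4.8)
The plan is to reduce $\zeta'_*\circ\Gamma=\zeta_*$ to a statement about Fredholm indices of Toeplitz-type operators, using Proposition~\ref{prp:paind} and Remark~\ref{rmk:K1supp}, and then to recognize the two indices as the index of the same operator transported by the isometry $\gamma^*$. Concretely, fix $x\in K_1(C^*(W\subset M))$ and, by Remark~\ref{rmk:K1supp}, represent it by $u=v+1\in GL_l(C^*(W\subset M))$ with $v$ supported in $N_r(W)$ (shrinking $r$ if necessary so that $\gamma$ is defined on $N_r(W')$). Then by definition $\Gamma(x)=[\gamma^*u(\gamma^*)^{-1}]$, with $\gamma^*v(\gamma^*)^{-1}$ supported in $N_r(W')$. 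Applying Remark~\ref{rmk:K1supp} on both sides,
\[
\zeta_*(x)=-\frac{1}{8\pi i}\operatorname{ind}\bigl(\Pi p_r u p_r\Pi\ \text{on}\ p_r\Pi(L^2(M,S))^l\bigr),
\]
and similarly $\zeta'_*(\Gamma(x))$ is $-\tfrac{1}{8\pi i}$ times the index of $\Pi' p'_r\,\gamma^*u(\gamma^*)^{-1}\,p'_r\Pi'$ on $p'_r\Pi'(L^2(M',S'))^l$, where $p_r,p'_r$ are the characteristic functions of $N_r(W),N_r(W')$.

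The key step is that the Hilbert space isometry $\gamma^*$ identifies these two compressed operators. Since $\gamma:N_r(W')\to N_r(W)$ is an isometry lifting a bundle isomorphism and (by hypothesis) preserving partitions, $\gamma^*$ restricts to a unitary $p_r\Pi(L^2(M,S))^l\to p'_r\Pi'(L^2(M',S'))^l$: the condition $\Pi'=\Pi\circ\gamma$ on $N_r(W')$ is exactly what guarantees that $\gamma^*$ intertwines the projections $p_r\Pi$ and $p'_r\Pi'$. Conjugating $\Pi p_r u p_r\Pi$ by this unitary produces $\Pi' p'_r\,\gamma^* u(\gamma^*)^{-1}\,p'_r\Pi'$, because $\gamma^*$ is a unitary that fixes everything supported in $N_r(W)$ up to this conjugation and because $u-1=v$ is supported in $N_r(W)$ (so that outside $N_r(W)$ both operators agree with the identity, which $\gamma^*$ maps to the identity). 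Hence the two Fredholm operators are unitarily equivalent, so they have the same index, and $\zeta'_*(\Gamma(x))=\zeta_*(x)$.

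One should be slightly careful about two technical points. First, one must check that $\gamma^*$ genuinely maps $p_r\Pi(L^2(M,S))^l$ onto $p'_r\Pi'(L^2(M',S'))^l$: this is where $\Pi'=\Pi\circ\gamma$ enters, and it is a direct verification that the characteristic functions of $N_r(W')\cap M'^+$ and $N_r(W)\cap M^+$ correspond under $\gamma$. Second, one must confirm that $\gamma^* u(\gamma^*)^{-1}$, when compressed, really equals $\gamma^*$ applied to the compression of $u$; this follows because $u$ acts as the identity outside the support of $v\subset N_r(W)$, so only the part of $u$ on $N_r(W)$ matters and there $\gamma^*$ is a genuine unitary intertwiner. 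I do not expect a serious obstacle: the content is a bookkeeping argument identifying two unitarily equivalent Fredholm operators, the only mild subtlety being the compatibility of supports with the cutoffs $p_r,p'_r$ and the partition functions $\Pi,\Pi'$. (If one prefers, one can alternatively deduce this lemma from Lemma~\ref{lem:Higson} applied to the pairs $(M,W)$ and $(M',W')$ with the isometry $\gamma$ of neighborhoods in place of the half-space isometries, but the direct index computation above is cleaner here since $\gamma$ is only defined near $W$.)
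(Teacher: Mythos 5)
Your proposal is correct and matches the paper's own argument: the paper likewise represents $x$ by $u=v+1$ with $v$ supported in $N_r(W)$, and shows the two index problems agree by intertwining $(1-p_r\Pi)+\Pi p_r u p_r\Pi$ with $(1-p'_r\Pi')+\Pi'p'_r\gamma^*u(\gamma^*)^{-1}p'_r\Pi'$ via the invertible $V=\gamma^*+\nu$, which is just the whole-space packaging of your unitary equivalence of compressions, with $\Pi'=\Pi\circ\gamma$ entering at exactly the point you identify.
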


\begin{proof}
Let $p_{r}$ and $p'_{r}$ be the characteristic function 
of $N_{r}(W)$ and $N_{r}(W')$, respectively. 
Take any $x \in K_{1}(C^{\ast}(W \subset M))$. 
We represent $x$ by $u = v+1 \in GL_{l}(C^{\ast}(W \subset M))$ 
such that $v \in M_{l}(C^{\ast}(W \subset M))$ 
is supported in $N_{r}(W)$. 
We have 
\[
\zeta'_{\ast} \circ \Gamma (x) 
= 
\ind \left( (1 - p'_{r} \Pi') + \Pi' p'_{r} \gamma^{\ast}u(\gamma^{\ast})^{-1} p'_{r} \Pi' \right).  
\]
Take any invertible operator  
$\nu : 
L^{2}((N_{r}(W) \cap M^{+})^{c} , S) \to L^{2}((N_{r}(W') \cap M'^{+})^{c} , S')$ 
%and 
%$v^{\pm} : 
%L^{2}(N_{r}(W)^{c} \cap M^{\pm} , S) \to L^{2}(N_{r}(W')^{c} \cap M'^{\pm} , S')$ 
and set $V = \gamma^{\ast} + \nu$. 
Then we have 
\begin{align*}
& \; \left( (1 - p'_{r} \Pi') + \Pi' p'_{r} \gamma^{\ast}u(\gamma^{\ast})^{-1} p'_{r} \Pi' \right) V - V\left((1-p_{r}\Pi) + \Pi p_{r} u p_{r}\Pi \right) \\ 
=& \; \Pi' p'_{r} \gamma^{\ast}u(\gamma^{\ast})^{-1} p'_{r} \Pi' \gamma^{\ast} 
	- \gamma^{\ast} \Pi p_{r} u p_{r}\Pi 
= 0, 
\end{align*}
here we used $\Pi' = \Pi \circ \gamma$ on $N_{r}(W')$. 
Thus we obtain 
\begin{align*}
\zeta'_{\ast} \circ \Gamma (x) &= 
-\frac{1}{8\pi i} \ind \left( (1 - p'_{r} \Pi') + \Pi' p'_{r} \gamma^{\ast}u(\gamma^{\ast})^{-1} p'_{r} \Pi' \right) \\ 
&= -\frac{1}{8\pi i}\ind \left((1-p_{r}\Pi) + \Pi p_{r} u p_{r}\Pi \right) 
= \zeta_{\ast}(x). 
\end{align*}
\end{proof}

\section{Relative index formula on odd dimension}

In this section, 
we state and prove an index theorem for 
an odd relative index data partitioned by 
submanifolds of codimension $1$. 
This index formula is a variant of 
\cite[Theorem 3.3]{KSZ-rp}. 
% Karami-Sadegh-Zaden, Relative-partitioned index theorem

\subsection{Index theorem}
\label{subsec:odd}

Firstly, we introduce a partition of a relative index data 
over a pair $(M,W)$. 
See also 
\cite{KSZ-rp}. 
% Karami-Sadegh-Zaden, Relative-partitioned index theorem

\begin{dfn}
Let 
$(M,W)$ be a pair of a complete Riemannian manifold $M$ and 
$W \subset M$ a closed subset.  
We say a relative index data 
$(M_{i} , W_{i} , D_{i})$ over $(M,W)$ 
is partitioned by $(N_{1}, N_{2})$ if the following hold. 
\begin{itemize}
\item 
$(M_{i} , W_{i})$ is partitioned by $N_{i}$, 
\item 
$\Pi_{2} = \Pi_{1} \circ \psi$, 
here $\Pi_{i}$ is the characteristic function of $M_{i}^{+}$, 
\item 
there exists a closed submanifold $N \subset M$ 
which partitions $(M,W)$ such that 
$N_{i} = f_{i}^{-1}(N)$ and $f_{i}(Z_{i}) = W \cap N$. 
\end{itemize}
\end{dfn}

Let $(M_{i} , W_{i} , D_{i})$ be an odd relative index data 
over $(M,W)$ 
partitioned by $(N_{1} , N_{2})$. 
The Dirac operator $D_{i}$ induces 
a graded Dirac operator $D_{N_{i}}$ on $S|_{N_{i}} \to N_{i}$ 
and 
they satisfies 
$(\psi_{N_{2} \setminus Z_{2}})^{\ast} \circ D_{N_{1}} 
= D_{N_{2}} \circ (\psi_{N_{2} \setminus Z_{2}})^{\ast}$. 
Then 
the relative topological index 
$\text{\rm ind}_{t}(D_{N_{1}} , D_{N_{2}}) \in \mathbb{Z}$ 
is obtained. 

Following 
\cite[Section 4]{MR720933}, 
% Gromov-Lawson, 
%Positive scalar curvature and the Dirac operator on complete Riemannian manifolds
we recall the definition of 
the relative topological index
$\text{\rm ind}_{t}(D_{N_{1}} , D_{N_{2}}) \in \mathbb{Z}$. 
Chop off the manifold $N_{i}$ outside of $Z_{i}$ 
by a closed submanifold $H_{i}$ of codimension $1$ 
to obtain a compact manifold $\Omega_{i}$ with boundary 
$\partial \Omega_{i} = H_{i}$ such that 
$N_{r}(Z_{i}) \subset \Omega_{i}$ for some $r > 0$. 
Let $\widetilde{N_{i}}$ 
be a closed manifold such that 
$\Omega_{i} \subset \widetilde{N_{i}}$
and $\widetilde{S_{N_{i}}} \to \widetilde{N_{i}}$ a graded Clifford bundle. 
Assume that 
all structures on 
$\Omega_{i} \subset N_{i}$ are isomorphic to those on  
$\Omega_{i} \subset \widetilde{N_{i}}$, respectively, 
and there exists an isometry 
$\widetilde{\psi} : \widetilde{N_{2}} \setminus Z_{2} 
	\to \widetilde{N_{1}} \setminus Z_{1}$ 
such that $\widetilde{\psi}$ induces an isometry of graded Clifford bundles 
$\widetilde{\psi}^{\ast} : 
\widetilde{S_{N_{1}}}|_{\widetilde{N_{1}} \setminus Z_{1}} 
	\to \widetilde{S_{N_{2}}}|_{\widetilde{N_{2}} \setminus Z_{2}}$. 
There is the graded Dirac opeator $\widetilde{D_{N_{i}}}$ on 
$\widetilde{S_{N_{i}}}$. 
Set 
\[
\text{\rm ind}_{t}(D_{N_{1}} , D_{N_{2}}) 
= 
\ind (\widetilde{D_{N_{1}}}^{+}) - \ind (\widetilde{D_{N_{2}}}^{+}) \in \mathbb{Z}. 
\]
The value is independent of the choice of 
a compactification $\widetilde{N_{i}}$ 
and a graded Clifford bundle $\widetilde{S_{i}}$. 

Our first main theorem is the following. 
This is a variant of 
\cite[Theorem 3.3]{KSZ-rp}. 
% Karami-Sadegh-Zaden, Relative-partitioned index theorem

\begin{thm}
\label{thm:main1}
Let $M$ be a complete Riemannian manifold and 
$W \subset M$ a closed subset. 
Let $(M_{i}, W_{i} , D_{i})$ be an odd 
relative index data over $(M,W)$ 
which is partitioned by $(N_{1} , N_{2})$. 
Then the following formula holds: 
\[
\zeta_{\ast}(\text{\rm c-ind}(D_{1} , D_{2})) 
= -\frac{1}{8\pi i}\text{\rm ind}_{t}(D_{N_{1}} , D_{N_{2}}) . 
\]
\end{thm}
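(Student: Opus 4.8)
The plan is to follow the two-step strategy outlined in the introduction: first reduce the general partitioned relative index data to a product (cylindrical) situation near $N_{1}$ and $N_{2}$, and then evaluate both sides directly in that product case by invoking Roe's original partitioned index theorem~\eqref{eq:Roe}. For the reduction step I would use the ``cobordism'' invariance Lemma~\ref{lem:cobor} together with Higson-type invariance Lemma~\ref{lem:Higson} and the localization Lemma~\ref{lem:locW}. Concretely, since $Z_{i}=N_{i}\cap W_{i}$ is compact and $N_{i}$ is coarsely transversal to $W_{i}$, a neighborhood $N_{r}(Z_{i})$ is compact and can be identified, after a deformation of the metric and Clifford structure supported away from $Z_{i}$, with a neighborhood of $Z_{i}$ inside a cylinder $N_{i}\times(-\varepsilon,\varepsilon)$; the isometry $\psi$ restricts compatibly on the overlap because of the partition condition $\Pi_{2}=\Pi_{1}\circ\psi$. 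The coarse relative index $\text{c-ind}(D_{1},D_{2})$ is, by Remark following Definition~\ref{dfn:crelind}, represented by $[f(D_{1})]-[U^{\ast}f(D_{2})U]$ with $f(D_{i})-U^{\ast}f(D_{2})U$ supported in $N_{r}(W_{1})$; by Remark~\ref{rmk:K1supp} its $\zeta_{\ast}$-value depends only on the restriction of this representative to $\Pi_{1}p_{r}(L^{2})^{l}$, which after the deformation lives entirely in the compact region $N_{r}(Z_{1})$.

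Next I would make the product reduction of the topological side match. Recall that $\text{ind}_{t}(D_{N_{1}},D_{N_{2}})$ is defined by chopping $N_{i}$ outside $Z_{i}$ along $H_{i}$, closing up to $\widetilde{N_{i}}$, and taking $\ind(\widetilde{D_{N_{1}}}^{+})-\ind(\widetilde{D_{N_{2}}}^{+})$. I would choose the chopping hypersurfaces $H_{i}$ so that $\psi$ conjugates $H_{1}$ to $H_{2}$ on the complement of $Z_{i}$, and choose the closed-up manifolds $\widetilde{N_{i}}$ with a cylinder structure near $\partial\Omega_{i}$. Then the cylinders $\widetilde{N_{i}}\times\mathbb{R}$ (with the induced graded Dirac operators $\widetilde{D_{N_{i}}}+$ the translation generator, giving a genuine Dirac operator $\widetilde{D_{i}}$ on an even-dimensional partitioned manifold $\widetilde{M_{i}}$, partitioned by $\widetilde{N_{i}}=\widetilde{N_{i}}\times\{0\}$) form an \emph{honest} partitioned pair, and on the complement of the compact set $\widetilde{Z_{i}}$ the isometry $\widetilde{\psi}\times\text{id}$ conjugates all data. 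Applying Lemma~\ref{lem:cobor}, Lemma~\ref{lem:Higson} (in the version with the $M^{+}$-isometry, using $\gamma:\widetilde{M_{1}}^{+}\to M_{1}^{+}$ near the partition) and Lemma~\ref{lem:locW}, the left-hand side $\zeta_{\ast}(\text{c-ind}(D_{1},D_{2}))$ equals $\zeta_{\ast}(\text{c-ind}(\widetilde{D_{1}},\widetilde{D_{2}}))$ computed on the product. But on the product, $\text{c-ind}(\widetilde{D_{1}},\widetilde{D_{2}})=\text{c-ind}(\widetilde{D_{1}})-\text{c-ind}(\widetilde{D_{2}})$ as a difference of genuine Roe odd index classes, and Roe's formula~\eqref{eq:Roe} applied to each summand gives $\zeta_{\ast}(\text{c-ind}(\widetilde{D_{i}}))=-\frac{1}{8\pi i}\ind(\widetilde{D_{N_{i}}}^{+})$, whose difference is exactly $-\frac{1}{8\pi i}\text{ind}_{t}(D_{N_{1}},D_{N_{2}})$.

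I expect the main obstacle to be the \textbf{bookkeeping of the deformation and excision} in the first step: one must verify that modifying the metric, the Clifford bundle, and the operator $D_{i}$ outside $N_{r}(Z_{i})$ changes neither $\zeta_{\ast}(\text{c-ind}(D_{1},D_{2}))$ nor $\text{ind}_{t}(D_{N_{1}},D_{N_{2}})$, and that all three invariance lemmas can be applied with mutually compatible choices of $r$, of the chopping hypersurfaces, and of the identification $\gamma$ near the partitions. This requires checking that the supports stay inside the regions where the hypotheses of Lemmas~\ref{lem:Higson} and~\ref{lem:locW} hold — in particular that $\gamma^{\ast}u\Pi\sim\Pi'u'\gamma^{\ast}$ modulo compacts, which uses local compactness of $f(D_{i})-U^{\ast}f(D_{2})U$ away from $W_{i}$ together with the compactness of $N_{r}(Z_{i})$. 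The invariance of $\text{ind}_{t}$ under the choice of compactification, usually a separate input, is here \emph{not} assumed; instead it will follow as a byproduct, since the left-hand side $\zeta_{\ast}(\text{c-ind}(D_{1},D_{2}))$ is manifestly independent of all these choices. A secondary technical point is ensuring that the reduction to the product form of the Dirac operator near $N_{i}$ (so that $D_{i}$ anticommutes with the partition involution up to lower-order terms that do not affect the index) can be arranged; this is the standard normal-form argument for Dirac operators near a hypersurface and I would cite the analogous step in \cite{MR996446} and \cite{seto2}.
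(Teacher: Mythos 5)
Your proposal is correct and follows essentially the same route as the paper: reduction to the cylindrical case via the cobordism invariance (Lemma \ref{lem:cobor}), the Higson-type lemma (Lemma \ref{lem:Higson}) and the localization lemma (Lemma \ref{lem:locW}), followed by passage to the compactifications $\widetilde{N_{i}}$ and an application of Roe's partitioned index theorem to each summand of the split relative class. The only slip is calling $\widetilde{M_{i}}$ ``even-dimensional'' (the nontrivial case is $M_{i}$ odd-dimensional), which does not affect the argument.
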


We prove Theorem \ref{thm:main1} in Subsection \ref{subsec:pr1}. 
In the proof of Theorem \ref{thm:main1}, 
we do not use the fact that 
the relative topological index 
$\text{\rm ind}_{t}(D_{N_{1}} , D_{N_{2}})$ 
does not depend on the choice of 
a compactification $\widetilde{N_{i}}$ 
and a graded Clifford bundle $\widetilde{S_{N_{i}}}$.   
Thus Theorem \ref{thm:main1} gives a new proof 
of well definedness of 
$\text{\rm ind}_{t}(D_{N_{1}} , D_{N_{2}})$. 

\begin{rmk}
By the definition of the relative topological index and 
the vanishing of the Fredholm index of the Dirac operator 
on closed manifolds of odd dimension, 
the relative topological index vanishes when $M_{i}$ is of even dimension. 
Thus the value $\zeta_{\ast}(\text{\rm c-ind}(D_{1} , D_{2}))$ also vanishes. 
We prove another relative index theorem for partitioned manifolds 
with non-vanishing the value $\zeta_{\ast}(x)$ 
when $M_{i}$ is of even dimension,  
in Section \ref{sec:even}. 
\end{rmk}

\subsection{Proof}
\label{subsec:pr1}

In this subsection, 
we prove Theorem \ref{thm:main1}. 
There are $2$ steps to prove it, 
the first one is the reduction to the product case and 
the second one is the proof of the product case. 

Firstly, we reduce the product case. 
Take a tubular neighbourhood of $N_{i}$ 
diffeomorphic to $(-1, 1) \times N_{i}$ such that 
$[0, 1) \times N_{i} \subset M_{i}^{+}$. 
By Lemma \ref{lem:Higson}, 
we can replace $W_{i}$ by $W_{i} \cup (Z_{i} \times [0, 1))$ 
without changing the value $\zeta_{\ast}(\text{\rm c-ind}(D_{1} , D_{2}))$. 
Fix small $r > 0$. 
Take a submanifold $N'_{i} \subset M_{i}$ 
which partitions $(M_{i} , W_{i})$ such that 
$N'_{i} \subset N_{r}(M_{i}^{-})$ and 
$N'_{i} \cap M_{i}^{-} = \emptyset$. 
Denote by $\zeta_{r}$ the cyclic cocycle 
defined by using this new partition. 
By Lemma \ref{lem:cobor}, 
we have 
\[
\zeta_{\ast}(\text{\rm c-ind}(D_{1} , D_{2}))  
= (\zeta_{r})_{\ast}(\text{\rm c-ind}(D_{1} , D_{2})).  
\]

Next we take 
a relative index data 
$(M_{i}' , W_{i}' , D_{i}')$ over $(M' , W')$ 
as follows. 
We set  
$M_{\bullet}' 
= (\mathbb{R}_{-} \times N_{\bullet}) 
	\cup M_{\bullet}^{+}$ and    
$W_{\bullet}' 
= (\mathbb{R}_{-} \times Z_{\bullet}) 
	\cup W_{\bullet}^{+}$,  
here $\bullet = 1, 2$ or empty and 
a metirc is product 
on $(-\infty , -r] \times N_{\bullet}$. 
A Clifford bundle $S_{i}' \to M_{i}'$ 
satisfies 
$S_{i}'|_{M_{i}^{+}} = S_{i}|_{M_{i}^{+}}$ 
and $S_{i}' = (-\infty , -r] \times S_{i}|_{N_{i}}$.  
An isometry 
$\psi' : M_{2}' \setminus W_{2}' \to M_{1}' \setminus W_{1}'$ 
satisfies $\psi' |_{M_{2}^{+}} = \psi |_{M_{2}^{+}}$
and $\psi'  = \mathrm{id} \times \psi |_{N}$ 
on $(-\infty , -r] \times (Z_{2})^{c}$ 
and 
a continuous coarse map 
$f_{i} : M_{1}' \to M'$ 
satisfies 
$f_{i}' |_{M_{i}^{+}} = f_{i} |_{M_{i}^{+}}$ 
and 
$f_{i}' = \mathrm{id} \times f_{i} |_{N}$ 
on $(-\infty , -r] \times N_{2}$. 
A pair $(M_{1}' , W_{1}')$ is partitioned by $N_{1}'$. 
Denote by $\zeta'$ the cyclic cocycle on $M_{1}'$. 
Note that 
we do not have to care 
the relative index data $(M_{i}' , W_{i}' , D_{i}')$ 
is partitioned or not, that is, 
we do not have to care 
$N_{i}' = (f_{i}')^{-1}(N')$ holds or not 
for some $N'$. 

\begin{lem}
\label{lem:red1}
We have 
\[
(\zeta_{r})_{\ast}(\text{\rm c-ind}(D_{1} , D_{2})) 
= 
\zeta'_{\ast}(\text{\rm c-ind}(D_{1}' , D_{2}')). 
\]
\end{lem}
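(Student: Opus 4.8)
The plan is to identify $\zeta_{\ast}(\text{c-ind}(D_1,D_2))$ computed with the partition $N_i'$ with the corresponding quantity on the modified manifolds $M_i'$, by showing the two index data agree on a neighbourhood of $W_i$ that contains $N_i'$, and then invoking Lemma \ref{lem:locW}. First I would make the elementary observation that, because $N_i' \subset N_r(M_i^-)$ and $N_i' \cap M_i^- = \emptyset$, the partition $N_i'$ lies entirely inside the region $M_i^+ \cup (\text{collar of }N_i)$; after the replacement $W_i \rightsquigarrow W_i \cup (Z_i \times [0,1))$ the whole of $N_r(W_i)$ (for the small fixed $r$) sits inside $M_i^+$ together with the product collar $(-r,0]\times N_i$. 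This is exactly the portion of $M_i$ that is glued verbatim into $M_i'$: by construction $S_i'|_{M_i^+} = S_i|_{M_i^+}$, the metric is the original one there and is product on the collar, and similarly $\psi'$, $f_i'$ restrict to $\psi$, $f_i$ on that region. Hence there is a genuine isometry $\gamma : N_r(W_i') \to N_r(W_i)$ lifting an isomorphism of the Clifford bundles, and since $\Pi_i' = \Pi_i\circ\gamma$ there (both characteristic functions of $M_i^+$ extended through the product collar, where $N_i'$ cuts at the same place), $\gamma$ preserves the partitions in the sense required by Lemma \ref{lem:locW}.

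Next I would track the index classes through this identification. The coarse relative index $\text{c-ind}(D_1,D_2)$ is, by the remark after Definition \ref{dfn:crelind}, represented by $[f(D_1)] - [U^\ast f(D_2)U]$ where $f\in U_1(C_0(\mathbb{R}))$ has $\Supp(\hat f)\subset(-r',r')$ for $r'$ as small as we like; choosing $r'$ small compared with $r$ guarantees that $f(D_1) - U^\ast f(D_2) U$ is supported in $N_{r'}(W_1)\subset N_r(W_1)$, so by finite propagation it only sees the geometry of $D_i$ on $N_r(W_i)$. Since $D_i$ and $D_i'$ agree there (they are literally the same operator on $M_i^+$ and its product collar, and $f(D_i)$, $f(D_i')$ are built from that by functional calculus with a kernel supported within propagation $r'$), the operator $\gamma^\ast$ intertwines the representative of $\text{c-ind}(D_1,D_2)$ with that of $\text{c-ind}(D_1',D_2')$. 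Therefore $\Gamma(\text{c-ind}(D_1,D_2)) = \text{c-ind}(D_1',D_2')$ in $K_1(C^\ast(W_1'\subset M_1'))$, and Lemma \ref{lem:locW} gives $(\zeta_r)_\ast(\text{c-ind}(D_1,D_2)) = \zeta'_\ast\circ\Gamma(\text{c-ind}(D_1,D_2)) = \zeta'_\ast(\text{c-ind}(D_1',D_2'))$.

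The step I expect to be the main obstacle is the careful propagation bookkeeping: one has to fix the order of quantifiers so that $r'$ (the propagation of the chopping/Cayley representatives) is chosen after $r$ (the collar width and the support radius of the representative $u = v+1$ used in $\Gamma$), and then verify that the two Dirac operators really do produce identical operators when truncated to $N_r(W_i)$ — this uses finite propagation speed for the wave operator $e^{itD_i}$ together with the fact that the metrics, bundles and connections coincide on $M_i^+$ and on the product collar $(-r,0]\times N_i$. One also must check that $U$ and $U'$ can be chosen compatibly with $\gamma$, i.e. both equal to $\psi^\ast$ (resp. $\psi'^\ast$) off $W_i$ and off the collar, so that the difference classes match on the nose rather than merely up to a correction term; this is where the hypothesis $\Pi_2 = \Pi_1\circ\psi$ (hence $\Pi_2' = \Pi_1'\circ\psi'$) and the freedom in the choice of $U$ noted before Definition \ref{dfn:crelind} are used. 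Once these compatibilities are pinned down, the remainder is a direct appeal to Lemmas \ref{lem:locW} and the representative formula after Definition \ref{dfn:crelind}.
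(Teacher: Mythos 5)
There is a genuine gap, and it lies in the very first step of your plan. You claim that after replacing $W_i$ by $W_i \cup (Z_i\times[0,1))$, ``the whole of $N_r(W_i)$ sits inside $M_i^+$ together with the product collar,'' and hence that there is an isometry $\gamma : N_r(W_i') \to N_r(W_i)$ to which Lemma \ref{lem:locW} can be applied. This is false in general: $W_i$ is an arbitrary closed subset of $M_i$ and its negative part $W_i^- = W_i\cap M_i^-$ is typically nonempty and noncompact (only $Z_i = N_i\cap W_i$ is assumed compact). The construction of $M_i'$ deletes all of $M_i^-$ and replaces it by the half-cylinder $\mathbb{R}_-\times N_i$, replacing $W_i^-$ by $\mathbb{R}_-\times Z_i$; these two pieces need not be isometric, so $N_r(W_1)$ and $N_r(W_1')$ are not isometric and the map $\Gamma$ of Lemma \ref{lem:locW} is simply not defined in this situation. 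Consequently the identity $\Gamma(\text{c-ind}(D_1,D_2)) = \text{c-ind}(D_1',D_2')$ cannot even be stated. (Lemma \ref{lem:locW} is the right tool later, in Lemma \ref{lem:cindloc}, where only the closed manifold $N_i$ is compactified and the data really do agree on a full neighborhood of $W_i$; here it is not.)

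The repair is to use Lemma \ref{lem:Higson} instead, which only requires the two index data to agree on the \emph{positive} sides of the partitions rather than on neighborhoods of $W_1$ and $W_1'$. The positive side $M_{1,r}^{+}$ of $M_1$ with respect to the shifted partition $N_1'$ coincides isometrically with the positive side of $M_1'$ (both are $M_1^+$ together with a piece of the collar), even though the negative sides differ completely. Choosing $U=U'$ on $L^2(M_1^+,S_1)$ and a representative $f$ with $\mathrm{Supp}(\hat f)\subset(-r/4,r/4)$, finite propagation gives
\[
\Pi_{1,r}\, f(D_1)U^{\ast}f(D_2)^{\ast}U\, \Pi_{1,r}
=
\Pi_1'\, f(D_1')U'^{\ast}f(D_2')^{\ast}U'\, \Pi_1',
\]
and since $\zeta_{\ast}$ only depends on the compression of the representative to the positive side, Lemma \ref{lem:Higson} yields the claim. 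Your propagation bookkeeping and the compatible choice of $U$, $U'$ are exactly what is needed for this version of the argument; it is the localization lemma you invoke, and the region on which you assert the data agree, that must change.
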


\begin{proof}
Take unitaries $U$ and $U'$ 
appeared in the definition of the projection $q$ 
such that 
$U = U'$ on $L^{2}(M_{1}^{+} , S_{1}')$ and 
propagation of $U$ and $U'$ is less than $r/4$. 
%Recall that  
%\[ 
%\text{\rm c-ind}(D_{1}, D_{2}) = 
%\left[ \frac{D_{1}-i}{D_{1}+i} \right] 
%- \left[ U^{\ast} \frac{D_{2}-i}{D_{2}+i} U \right] 
%\in K_{1}(C^{\ast}(W_{1} \subset M_{1}).  
%\]
Take a function $f \in U_{1}(C_{0}(\mathbb{R}))$ such that 
$\mathrm{Supp}(\hat{f}) \subset (-r/4 , r/4)$ and 
$[f] = \left[ \frac{x-i}{x+i} \right] \in K_{1}(C_{0}(\mathbb{R}))$. 
We have 
\[
\text{\rm c-ind}(D_{1}, D_{2}) = 
\left[ f(D_{1})U^{\ast}f(D_{2})^{\ast}U \right]  
\]
and 
\[
\text{\rm c-ind}(D_{1}', D_{2}') = 
\left[ f(D_{1}')U'^{\ast}f(D_{2}')^{\ast}U' \right] .  
\]
Since propagation of 
$f(D_{1})U^{\ast}f(D_{2})^{\ast}U$ 
and 
$f(D_{1}')U'^{\ast}f(D_{2}')^{\ast}U'$ 
is 
less than $r$, 
we have 
\[
\Pi_{1,r} f(D_{1})U^{\ast}f(D_{2})^{\ast}U \Pi_{1,r} 
= 
\Pi_{1}' f(D_{1}')U'^{\ast}f(D_{2}')^{\ast}U' \Pi_{1}', 
\]
here $\Pi_{1,r}$ (resp. $\Pi_{1}'$) 
is the characteristic function of 
$M_{1,r}^{+}$ (resp. $M_{1}'{}^{+} (= M_{1,r}^{+})$). 
By using Lemma \ref{lem:Higson}, 
we complete the proof.

%Take $f \in C_{0}(\mathbb{R})^{+}$ 
%such that $\mathrm{Supp}(\hat{f}) \subset (-r/3 , r/3)$ 
%and 
%$\text{\rm c-ind}((D_{1} , D_{2})) 
%= 
%[f(D_{1})] - [U^{\ast} f(D_{2}) U] 
%= 
%[f(D_{1})U^{\ast}f(D_{2})^{-1}U]$. 
%An operator 
%$f(D_{1})U^{\ast}f(D_{2})^{-1}U - 1$ 
%is supported in $N_{r}(W)$ 
%and 
%$f(D_{1}')U'^{\ast}f(D_{2}')^{-1}U' -1$ 
%is supported in $N_{r}(W')$. 
%Thus 
%we have 
%\[
%\Pi f(D_{1})U^{\ast}f(D_{2})^{-1}U \Pi 
%= 
%\Pi' f(D_{1}')U'^{\ast}f(D_{2}')^{-1}U' \Pi', 
%\]
%
%here $\Pi$ (resp. $\Pi'$) 
%is the characteristic function of 
%$M_{1}^{+}$ (resp. $M_{1}'{}^{+}$). 
%By using Lemma \ref{lem:Higson}, 
%we complete the proof. 
\end{proof}

We apply the same argument to $(M_{i}', W_{i}', D_{i}')$, 
so that 
the proof is reduced to the following product case. 
Let $(N,Z)$ be a pair of 
a complete Riemannian manifld $N$ and 
a compact subset $Z \subset N$ and 
$(N_{i} , Z_{i} , D_{N_{i}})$ 
an even relative index data over $(N,Z)$. 
Then 
$(M_{i} = \mathbb{R} \times N_{i} , 
W_{i} = \mathbb{R} \times Z_{i} , D_{i})$ 
is an odd relative index data over 
$(M = \mathbb{R} \times N , W = \mathbb{R} \times Z)$, 
here the Dirac operator $D_{i}$ on $\mathbb{R} \times N_{i}$ 
is canonically defined by using $D_{N_{i}}$.  
$(M_{i} , W_{i} , D_{i})$ is partitioned by 
$(\{ 0 \} \times N_{1} , \{ 0 \} \times N_{2})$. 
An isometry $\psi$ and a continuous coarse map $f_{i}$ 
are given by the product 
$\mathrm{id}_{\mathbb{R}} \times \psi_{N}$ 
and $\mathrm{id}_{\mathbb{R}} \times f_{N_{i}}$, respectively. 

Let us prove the product case. 
Take closed manifolds $\widetilde{N_{1}}$ and $\widetilde{N_{2}}$ 
as in the definition of the relative topological index. 
Set 
$\widetilde{M_{i}} = \mathbb{R} \times \widetilde{N_{i}}$ 
and 
$\widetilde{W_{i}} = \mathbb{R} \times Z_{i} = W_{i}$. 
Then 
$(\widetilde{M_{i}} , \widetilde{W_{i}} , \widetilde{D_{i}})$ 
is a relative index data over $(\mathbb{R}^{2},  \mathbb{R})$, 
here a continuous coarse map 
$\widetilde{f_{i}} : \widetilde{M_{1}} \to \mathbb{R}^{2}$ 
is defined to be $\widetilde{f_{i}}(x,y) = (x , \mathrm{dist}(y,Z_{i}))$. 
In order to use Lemma \ref{lem:locW}, 
we prove the following. 
This is based on a concept that 
coarse relative index depends only on 
a neighborhood of $W_{i}$; see also 
\cite[Proposition 4.7]{MR3439130}. 
% Roe, Positive curvature, partial vanishing theorems and coarse indices   

\begin{lem}
\label{lem:cindloc}
By using the map 
$\Gamma : K_{1}(C^{\ast}(W_{1} \subset M_{1})) 
\to K_{1}(C^{\ast}(\widetilde{W_{1}} \subset \widetilde{M_{1}}))$ 
defined by the identity map $N_{r}(W_{1}) \to N_{r}(\widetilde{W_{1}})$, we have 
\[
\Gamma (\text{\rm c-ind}(D_{1} , D_{2})) 
= 
\text{\rm c-ind}(\widetilde{D_{1}} , \widetilde{D_{2}}). 
\]
\end{lem}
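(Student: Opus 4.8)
The plan is to show that the coarse relative index is \emph{local near $W_i$}, in the sense that it depends only on the restriction of all the data to a fixed neighbourhood $N_r(W_i)$, and then to identify this restricted data with the corresponding data on $\widetilde{M_i}$. Recall from Remark \ref{rmk:K1supp} and the representative formula following Definition \ref{dfn:crelind} that $\text{\rm c-ind}(D_1,D_2) = [f(D_1)U^\ast f(D_2)^\ast U]$, where $f \in U_1(C_0(\mathbb{R}))$ has $\mathrm{Supp}(\hat f) \subset (-r_0,r_0)$ with $r_0$ as small as we like, and the operator $f(D_1) - U^\ast f(D_2) U$ is supported in $N_{r_0}(W_1)$; likewise for $\widetilde{D_i}$. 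So first I would fix $r_0 < r$ and choose the unitaries $U$, $\widetilde U$ in the definition of $q$ to have propagation less than $r - r_0$ (possible by the remarks preceding Definition \ref{dfn:crelind}), and moreover to agree: since $W_1 = \widetilde{W_1}$ as subsets, $M_1$ and $\widetilde M_1 = \mathbb{R}\times\widetilde{N_1}$ are literally \emph{equal} on the neighbourhood $N_r(W_1) = N_r(\mathbb{R}\times Z_1)$ (this uses that near $Z_1$ the manifold $N_1$ and its compactification $\widetilde{N_1}$ carry isomorphic structures, which is built into the definition of $\mathrm{ind}_t$), and the same holds for $M_2$ versus $\widetilde{M_2}$; the isometry $\psi$ and the Clifford data are identified under $\psi = \widetilde\psi$ on this neighbourhood, so $U$ and $\widetilde U$ can be taken equal there.

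Second, I would observe that $f(D_1)$ and $f(\widetilde D_1)$ agree on $N_{r_0}(W_1)$: since $f$ has Fourier transform supported in $(-r_0,r_0)$, the operator $f(D_1)$ has propagation less than $r_0$, hence $p f(D_1) p = p f(\widetilde D_1) p$ whenever $p$ is the characteristic function of a set contained in $N_r(W_1)$ and $r_0$ is small relative to $r$; this is the usual finite-propagation/unit-speed argument and the point is simply that $D_1$ and $\widetilde D_1$ are the \emph{same} differential operator on $N_r(W_1)$. The same goes for $U^\ast f(D_2)^\ast U$ versus $\widetilde U^\ast f(\widetilde D_2)^\ast \widetilde U$ after accounting for the propagation of the $U$'s. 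Therefore the representing unitaries $v + 1 := f(D_1)U^\ast f(D_2)^\ast U$ and $\widetilde v + 1 := f(\widetilde D_1)\widetilde U^\ast f(\widetilde D_2)^\ast \widetilde U$, both of which differ from $1$ only on $N_{r_0 + \text{(prop of }U)}(W_1) \subset N_r(W_1)$, satisfy $\gamma^\ast v (\gamma^\ast)^{-1} = \widetilde v$ under the identity identification $\gamma : N_r(\widetilde{W_1}) \to N_r(W_1)$. By the definition of the map $\Gamma$ this says precisely $\Gamma([v+1]) = [\widetilde v + 1]$, i.e. $\Gamma(\text{\rm c-ind}(D_1,D_2)) = \text{\rm c-ind}(\widetilde D_1,\widetilde D_2)$.

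The only genuinely delicate point is the well-definedness of the comparison: I must make sure that the unitaries $U$ and $\widetilde U$, which are global on $M_1$ and $\widetilde{M_1}$ respectively, can be chosen to literally coincide on $N_r(W_1)$ while still inducing $\mathrm{Ad}(U) : C^\ast(M_1) \to C^\ast(M_2)$ and $\mathrm{Ad}(\widetilde U) : C^\ast(\widetilde{M_1}) \to C^\ast(\widetilde{M_2})$ and while equalling $\psi^\ast$, resp. $\widetilde\psi^\ast$, off $W_1$; this is where I would invoke the independence of $q$ from the choice of such $U$ (stated after the direct sum decomposition $K_\ast(\mathfrak{C}) = K_\ast(C^\ast(W_1\subset M_1)) \oplus K_\ast(C^\ast(M_2))$) together with the freedom in choosing $V_1, V_2$ with arbitrarily small propagation. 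Granting that, the argument is a direct finite-propagation comparison with no further analytic input, entirely parallel to \cite[Proposition 4.7]{MR3439130}. I would close by remarking that the same reasoning, applied verbatim with the Toeplitz representatives $u_{\phi_i}$ in place of $f(D_i)$, will give the analogous localization statement needed in Section \ref{sec:even}.
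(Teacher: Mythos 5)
Your proposal is correct and follows essentially the same route as the paper's own proof: both represent $\text{\rm c-ind}(D_1,D_2)$ by $f(D_1)U^{\ast}f(D_2)^{\ast}U$ with $\mathrm{Supp}(\hat f)$ small, choose $U$ and $\widetilde U$ with small propagation agreeing on $L^{2}(N_r(W_1),S_1)$, and use the finite-propagation argument plus the fact that the data coincide near $W_1=\widetilde{W_1}$ to conclude that the two representatives are identified under $\Gamma$. Your extra care about the well-definedness of the choice of $U,\widetilde U$ (via the independence of $q$) is a reasonable elaboration of a point the paper leaves implicit, not a different method.
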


\begin{proof}
Take unitaries $U$ and $\widetilde{U}$ 
appeared in the definition of the projection $q$  
such that 
$U = \widetilde{U}$ on $L^{2}(N_{r}(W_{1}) , S_{1})$  
and 
propagation of $U$ and $\widetilde{U}$ is less than $r/8$. 
Take a function 
$f \in U_{1}(C_{0}(\mathbb{R}))$ such that 
$\mathrm{Supp}(\hat{f}) \subset (-r/4 , r/4)$ and 
$[f] = \left[ \frac{x-i}{x+i} \right] \in K_{1}(C_{0}(\mathbb{R}))$. 
We have
\[
\text{\rm c-ind}(D_{1} , D_{2}) = 
\left[ 
f(D_{1})U^{\ast}f(D_{2})^{\ast}U 
\right]
\]
and 
\[
\text{\rm c-ind}(\widetilde{D_{1}} , \widetilde{D_{2}}) = 
\left[ 
f(\widetilde{D_{1}})\widetilde{U}^{\ast}
f(\widetilde{D_{2}})^{\ast}\widetilde{U} 
\right]. 
\]

%Since we have 
%$\chi (D_{1})U^{\ast}\chi (D_{2}) U \varphi 
%= 
%\chi (D_{1})(\psi^{\ast})^{-1}\chi (D_{2}) \psi^{\ast} \varphi 
%= 
%\varphi 
%= U^{\ast}\chi (D_{2}) U \chi (D_{1}) \varphi$ 
%for $\varphi \in C_{b}(M_{1})$ 
%with $\mathrm{Supp}(\varphi) \subset  N_{r}(W_{1})^{c}$, 
%we have 
%\[
%e^{\pi i \chi (D_{1})}U^{\ast}e^{-\pi i \chi (D_{2})} U \varphi 
%= 
%\exp (\pi i ( \chi (D_{1}) - U^{\ast}\chi (D_{2}) U ) ) \varphi 
%= 
%\varphi . 
%\] 
%Similarly, 
%we have 
%\[
%\varphi e^{\pi i \chi (D_{1})}U^{\ast}e^{-\pi i \chi (D_{2})} U 
%= 
%(U^{\ast}e^{\pi i \chi (D_{2})} U e^{-\pi i \chi (D_{1})} \bar{\varphi})^{\ast} 
%= \varphi . 
%\] 
%Thus 
%an operator 
%$e^{\pi i \chi (D_{1})}U^{\ast}e^{-\pi i \chi (D_{2})} U - 1$ 
%is supported in $N_{r}(W_{1})$. 

Since operators $f(D_{1}) - U^{\ast}f(D_{2})U$ 
and 
$f(D_{1})^{\ast} - U^{\ast}f(D_{2})^{\ast}U$
are supported in $N_{r/4}(W_{1})$, 
an operator $f(D_{1})U^{\ast}f(D_{2})^{\ast}U - 1$ 
is supported in $N_{r/4}(W_{1})$. 
Since 
propagation of $f(D_{1})U^{\ast}f(D_{2})^{\ast}U$ 
is less than $3r/4$,  
we have 
\begin{align*}
\Gamma (\text{\rm c-ind}(D_{1} , D_{2})) 
&= 
\left[ 
f(D_{1})U^{\ast}f(D_{2})^{\ast}U 
\right] \\ 
&= 
\left[ 
f(\widetilde{D_{1}})\widetilde{U}^{\ast}
f(\widetilde{D_{2}})^{\ast}\widetilde{U} 
\right] \\
&= 
\text{\rm c-ind}(\widetilde{D_{1}} , \widetilde{D_{2}}). 
\end{align*}
 
%Take a function $f \in U_{1}(C_{0}(\mathbb{R}))$ such that 
%$\mathrm{Supp}(\hat{f}) \subset (-r/6 , r/6)$ 
%and 
%$\text{\rm c-ind}(D_{1} , D_{2}) = 
%[f(D_{1})U^{\ast} f(D_{2})^{\ast} U]$. 
%Since operators  
%$f(D_{1}) - U^{\ast}f(D_{2})U$ 
%and 
%$f(D_{1})^{\ast} - U^{\ast}f(D_{2})^{\ast}U$ 
%are supported in $N_{r}(W)$, 
%an operator 
%$f(D_{1})U^{\ast} f(D_{2})^{\ast} U - 1$ 
%is also supported in $N_{r}(W_{1})$. 
%Similarly, an operator 
%$f(\widetilde{D_{1}})\widetilde{U}^{\ast}f(\widetilde{D}_{2})^{\ast}\widetilde{U} - 1$ 
%is supported in $N_{r}(\widetilde{W_{1}}) = N_{r}(W_{1})$.  
%Thus we have 
%\[
%p_{r}f(D_{1})U^{\ast}f(D_{2})^{\ast}U p_{r} 
%= 
%\widetilde{p_{r}}f(\widetilde{D_{1}})\widetilde{U}^{\ast}
%f(\widetilde{D}_{2})^{\ast}\widetilde{U}\widetilde{p_{r}},  
%\]
%here $p_{r}$ (resp. $\widetilde{p_{r}}$) is 
%the characteristic function of $N_{r}(W_{1})$ 
%(resp. $N_{r}(\widetilde{W_{1}}) = N_{r}(W_{1})$). 
%%Note that 
%%\[
%%f(D_{1})U^{\ast}f(D_{2})^{\ast}U 
%%= 
%%p_{r}f(D_{1})U^{\ast}f(D_{2})^{\ast}U p_{r} \oplus (1 - p_{r})
%%\]
%%and 
%%\[
%%f(\widetilde{D_{1}})\widetilde{U}^{\ast}f(\widetilde{D}_{2})^{\ast}\widetilde{U}
%%= 
%%\widetilde{p_{r}}f(\widetilde{D_{1}})\widetilde{U}^{\ast}
%%f(\widetilde{D}_{2})^{\ast}\widetilde{U}\widetilde{p_{r}} 
%%\oplus (1 - \widetilde{p_{r}}). 
%%\]
%Therefore we have 
%\[
%\Gamma (\text{\rm c-ind}(D_{1} , D_{2})) 
%= 
%\text{\rm c-ind}(\widetilde{D_{1}} , \widetilde{D_{2}}). 
%\]
\end{proof}

Finally, we comlete the proof of Theorem \ref{thm:main1} 
for the product case. 
By Lemma \ref{lem:locW}, 
we have 
\[
\zeta_{\ast} (\text{\rm c-ind}(D_{1} , D_{2})) = 
\widetilde{\zeta}_{\ast}(\Gamma (\text{\rm c-ind}(D_{1} , D_{2}))). 
\]
By Lemma \ref{lem:cindloc}, 
the value equals 
$\widetilde{\zeta}_{\ast}(\text{\rm c-ind}(\widetilde{D_{1}} , \widetilde{D_{2}}))$. 
Since $\widetilde{N_{1}}$ and $\widetilde{N_{2}}$ is closed 
and we can take a unitary $\widetilde{U}$ satisfies 
$\widetilde{U}\widetilde{\Pi_{1}} = \widetilde{\Pi_{2}}\widetilde{U}$, 
we have 
\begin{align*}
\widetilde{\zeta}_{\ast}(\text{\rm c-ind}(\widetilde{D_{1}} , \widetilde{D_{2}})) 
&= -\frac{1}{8\pi i}
\ind \left( \widetilde{\Pi_{1}} \frac{\widetilde{D_{1}} - i}{\widetilde{D_{1}}+i} 
\widetilde{U}^{\ast}\frac{\widetilde{D_{2}} + i}{\widetilde{D_{2}} - i}
\widetilde{U} \widetilde{\Pi_{1}}\right) \\ 
&= -\frac{1}{8\pi i}
\ind \left( 
\widetilde{\Pi_{1}} 
\frac{\widetilde{D_{1}} - i}{\widetilde{D_{1}}+i} \widetilde{\Pi_{1}} \right) 
+ \frac{1}{8 \pi i}
\ind \left( 
\widetilde{\Pi_{2}} 
\frac{\widetilde{D_{2}} - i}{\widetilde{D_{2}}+i} \widetilde{\Pi_{2}} \right) . 
\end{align*}
By an index theorem on partitioned mannifolds  
\cite[Theorem 3.3]{MR996446},  
% Roe, Partitioning noncompact manifolds and the dual Toeplitz problem
the value equals
\[ 
-\frac{1}{8\pi i}
\left( \ind (\widetilde{D_{N_{1}}}^{+}) - \ind (\widetilde{D_{N_{2}}}^{+}) \right). 
\]
This is nothing but the relative topological index, 
so that the proof is completed. 

\section{Relative index formula on even dimension}
\label{sec:even}

In this section, 
we state and prove an index theorem for 
an even relative index data partitioned by 
submanifolds of codimension $1$. 
This index formula is a counterpart of Theorem \ref{thm:main1}. 

\subsection{Index theorem}

Let $(M_{i} , W_{i} , D_{i})$ be an even relative index data 
over $(M,W)$ 
partitioned by $(N_{1} , N_{2})$. 
The Dirac operator $D_{i}$ induces 
a Dirac operator $D_{N_{i}}$ on 
a Clifford bundle $S_{N_{i}} = S^{+}|_{N_{i}} \to N_{i}$ 
and 
they satisfies 
$(\psi_{N_{2} \setminus Z_{2}})^{\ast} \circ D_{N_{1}} 
= D_{N_{2}} \circ (\psi_{N_{2} \setminus Z_{2}})^{\ast}$. 
We denote by $(\phi_{N_{1}}, \phi_{N_{2}})$ 
the pair of restriction of functions 
$(\phi_{1} , \phi_{2}) \in GL_{l}(\mathfrak{W})$ to 
$N_{1}$ and $N_{2}$, respectively. 
Then we have 
$\phi_{N_{1}} \circ \psi  |_{Z_{2}^{c}} = \phi_{N_{2}} |_{Z_{2}^{c}}$. 

We define 
the relative topological Toeplitz index 
$\text{\rm ind}_{t}(\phi_{N_{1}} , D_{N_{1}} , \phi_{N_{2}}, D_{N_{2}})$. 
Let $\widetilde{N_{i}}$ be a closed manifold 
such that 
$\Omega_{i} \subset \widetilde{N_{i}}$
and $\widetilde{S_{N_{i}}} \to \widetilde{N_{i}}$
a Clifford bundle  as in subsection \ref{subsec:odd}. 
Namely, we assume that 
all structures on 
$\Omega_{i} \subset N_{i}$ are isomorphic to those on 
$\Omega_{i} \subset \widetilde{N_{i}}$, respectively, 
and 
there exists an isometry 
$\widetilde{\psi} : \widetilde{N_{2}} \setminus Z_{2} 
	\to \widetilde{N_{1}} \setminus Z_{1}$ 
such that $\widetilde{\psi}$ induces isometry of graded Clifford bundles 
$\widetilde{\psi}^{\ast} : 
\widetilde{S_{N_{1}}}|_{\widetilde{N_{1}} \setminus Z_{1}} 
	\to \widetilde{S_{N_{2}}}|_{\widetilde{N_{2}} \setminus Z_{2}}$. 
There is the Dirac opeator $\widetilde{D_{N_{i}}}$ on 
$\widetilde{S_{N_{i}}}$. 
Take $\widetilde{\phi_{N_{i}}} \in GL_{l}(C(\widetilde{N_{i}}))$ such that 
$\phi_{N_{i}}|_{\Omega_{i}} 
= \widetilde{\phi_{N_{i}}}|_{\Omega_{i}}$ 
and 
$\widetilde{\phi_{N_{1}}} \circ \widetilde{\psi} 
= 
\widetilde{\phi_{N_{2}}}|_{\widetilde{N_{2}} \setminus Z_{2}}$. 
Denote by $\mathcal{H}_{i}$ the subspace of 
$L^{2}(\widetilde{N_{i}}, \widetilde{S_{N_{i}}})$ 
generated by 
the non-negative eigenvectors of $\widetilde{D_{i}}$ 
and let 
$P_{i} : L^{2}(\widetilde{N_{i}}, \widetilde{S_{N_{i}}})^{l} \to \mathcal{H}_{i}^{l}$ 
be the projection. 
Then for any $s \in \mathcal{H}_{i}^{l}$, 
we define the Toeplitz operator 
$T_{\widetilde{\phi_{N_{i}}}} : \mathcal{H}_{i}^{l} \to \mathcal{H}_{i}^{l}$ 
by $T_{\widetilde{\phi_{N_{i}}}}(s) = P_{i}\widetilde{\phi_{N_{i}}}s$.  
The Toeplitz operator $T_{\widetilde{\phi_{N_{i}}}}$ is Fredholm 
since the values of $\widetilde{\phi_{N_{i}}}$ are in $GL_{l}(\mathbb{C})$. 
Set 
\[
\text{\rm ind}_{t}(\phi_{N_{1}} , D_{N_{1}} , \phi_{N_{2}}, D_{N_{2}}) 
= 
\ind \left(T_{\widetilde{\phi_{N_{1}}}} \right) 
- \ind \left(T_{\widetilde{\phi_{N_{2}}}} \right) \in \mathbb{Z}. 
\]
The value is independent of the choice of 
a compactification $\widetilde{N_{i}}$, 
a Clifford bundle $\widetilde{S_{N_{i}}}$ and 
a function $\widetilde{\phi_{N_{i}}}$. 
This is essentially due to 
\cite[Proposition 4.6]{MR720933}.  
% Gromov-Lawson, 
%Positive scalar curvature and the Dirac operator on complete Riemannian manifolds

Our second main theorem is the following. 
This is a counterpart of Theorem \ref{thm:main1} 
and also 
a generalization of 
\cite[Theorem 2.6]{seto2}.   
% Seto, Toeplitz operators and the Roe-Higson type index theorem 

\begin{thm}
\label{thm:main2}
Let $M$ be a complete Riemannian manifold and 
$W \subset M$ a closed subset. 
Let $(M_{i}, W_{i} , D_{i})$ be an even
relative index data over $(M,W)$ 
which is partitioned by $(N_{1} , N_{2})$. 
Take $(\phi_{1} , \phi_{2}) \in GL_{l}(\mathfrak{W})$. 
Then the following formula holds: 
\[
\zeta_{\ast}(\text{\rm c-ind}(\phi_{1} , D_{1} , \phi_{2} , D_{2})) 
= -\frac{1}{8\pi i}\text{\rm ind}_{t}(\phi_{N_{1}} , D_{N_{1}} , \phi_{N_{2}} , D_{N_{2}}) . 
\]
\end{thm}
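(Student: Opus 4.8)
The plan is to follow the same two-step strategy as in the proof of Theorem \ref{thm:main1}: first reduce to the product case using the cobordism invariance (Lemma \ref{lem:cobor}), Higson's lemma (Lemma \ref{lem:Higson}), and the localization-near-$W$ properties of $\zeta_{\ast}$ (Lemmas \ref{lem:locW} and \ref{lem:cindloc}), and then handle the product case directly by invoking the non-relative Toeplitz index theorem \cite[Theorem 2.6]{seto2} (equation (\ref{eq:Seto})). The key point is that every maneuver used in Section \ref{subsec:pr1} is formulated at the level of $K_{1}$-classes represented by invertibles supported near $W_{i}$, so it applies verbatim once we know the coarse relative Toeplitz index $\text{\rm c-ind}(\phi_{1},D_{1},\phi_{2},D_{2})$ admits such a representative.

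First I would record that $\text{\rm c-ind}(\phi_{1},D_{1},\phi_{2},D_{2}) = [u_{\phi_{1}}] - [U^{\ast}u_{\phi_{2}}U]$ with $u_{\phi_{1}} - U^{\ast}u_{\phi_{2}}U$ supported in $N_{2r}(W_{1})$ (the paragraph following Definition \ref{dfn:crelToeind}), and that by Remark \ref{rmk:K1supp} this $K_{1}$-class can be represented by some $u = v+1 \in GL_{l}(C^{\ast}(W_{1}\subset M_{1}))$ with $v$ supported in an arbitrarily small neighbourhood of $W_{1}$. Then I would replay the reduction steps: use Lemma \ref{lem:Higson} to enlarge $W_{i}$ to $W_{i}\cup(Z_{i}\times[0,1))$ along a tubular neighbourhood of $N_{i}$; use Lemma \ref{lem:cobor} to push the partition $N_{i}$ off $M_{i}^{-}$ into $N_{r}(M_{i}^{-})$; and prove the exact analogue of Lemma \ref{lem:red1} replacing the index data $(M_{i},W_{i},D_{i})$ by the half-cylinder model $(M_{i}',W_{i}',D_{i}')$, now carrying along the functions $\phi_{i}$ as well (which is harmless since $\phi_{i}\in GL_{l}(C_{\mathrm{w}}(M_{i}))$ restricts and extends across the gluing). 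The finite-propagation estimate for $u_{\phi_{i}}$ (propagation $< 2r$ when $\mathrm{Supp}(\hat\chi)\subset(-r,r)$) is exactly what makes the cut-and-paste of the representing invertibles on $M_{i}^{+}$ agree, as in Lemma \ref{lem:red1}. Iterating reduces to the product situation $(M_{i}=\mathbb{R}\times N_{i},\, W_{i}=\mathbb{R}\times Z_{i},\, D_{i})$ with $\phi_{i}$ pulled back from $N_{i}$.

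For the product case I would pass to the closed manifolds $\widetilde{N_{i}}$ and functions $\widetilde{\phi_{N_{i}}}\in GL_{l}(C(\widetilde{N_{i}}))$ from the definition of $\text{\rm ind}_{t}(\phi_{N_{1}},D_{N_{1}},\phi_{N_{2}},D_{N_{2}})$, form $\widetilde{M_{i}}=\mathbb{R}\times\widetilde{N_{i}}$, and prove the analogue of Lemma \ref{lem:cindloc}: $\Gamma(\text{\rm c-ind}(\phi_{1},D_{1},\phi_{2},D_{2})) = \text{\rm c-ind}(\widetilde\phi_{1},\widetilde D_{1},\widetilde\phi_{2},\widetilde D_{2})$ under the identity map $N_{r}(W_{1})\to N_{r}(\widetilde{W_{1}})$ — again because $u_{\phi_{i}} - U^{\ast}u_{\phi_{2}}U$ and its adjoint-type companion are supported near $W_{1}$ and have controlled propagation, so the representing invertibles literally coincide on the relevant neighbourhood. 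Then Lemma \ref{lem:locW} gives $\zeta_{\ast}(\text{\rm c-ind}) = \widetilde\zeta_{\ast}(\text{\rm c-ind}(\widetilde\phi_{1},\widetilde D_{1},\widetilde\phi_{2},\widetilde D_{2}))$. On the closed cylinders one can pick $\widetilde U$ with $\widetilde U\widetilde{\Pi_{1}} = \widetilde{\Pi_{2}}\widetilde U$, so $\widetilde\zeta_{\ast}$ of the difference class splits as
\[
-\frac{1}{8\pi i}\ind\!\bigl(\widetilde\Pi_{1} u_{\widetilde\phi_{1}}\widetilde\Pi_{1}\bigr) + \frac{1}{8\pi i}\ind\!\bigl(\widetilde\Pi_{2} u_{\widetilde\phi_{2}}\widetilde\Pi_{2}\bigr),
\]
and applying (\ref{eq:Seto}) (i.e. \cite[Theorem 2.6]{seto2}) to each closed partitioned manifold $\widetilde{M_{i}}=\mathbb{R}\times\widetilde{N_{i}}$ converts each term into $\ind(T_{\widetilde{\phi_{N_{i}}}})$, which assembles into $-\frac{1}{8\pi i}\text{\rm ind}_{t}(\phi_{N_{1}},D_{N_{1}},\phi_{N_{2}},D_{N_{2}})$.

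The main obstacle I expect is not any single estimate but bookkeeping the support and propagation of the Toeplitz-type invertibles $u_{\phi_{i}}$ through the gluing and compactification steps: unlike the pure Dirac case where $\text{\rm c-ind}(D_{1},D_{2})$ is represented by $f(D_{i})$ with $f$ of small Fourier support, here $u_{\phi_{i}}$ involves $\mathcal{D}_{i}\,\mathrm{diag}(\phi_{i},1)\,\mathcal{D}_{i}\,\mathrm{diag}(1,\phi_{i}^{-1})$, so one must check that multiplying by the (bounded, bounded-gradient, but globally supported) functions $\phi_{i}$ does not destroy the "supported in $N_{2r}(W_{1})$'' property of the difference $u_{\phi_{1}} - U^{\ast}u_{\phi_{2}}U$ — this is where $\phi_{1}\circ\psi = \phi_{2}$ and Lemmas \ref{lem:C*}, \ref{lem:D*} together with \cite[Remark 4.2]{seto2} must be used carefully, exactly as in the verification that $(u_{\phi_{1}},u_{\phi_{2}})\in GL_{l}(\mathfrak{C})$. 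Once that is in hand, the reduction lemmas go through with only notational changes, and the product case is a direct citation of (\ref{eq:Seto}).
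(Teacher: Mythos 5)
Your proposal follows essentially the same route as the paper's own proof: the same two-step reduction (Higson's lemma and cobordism invariance to pass to the half-cylinder model, then the localization lemmas to pass to the compactified product $\mathbb{R}\times\widetilde{N_{i}}$), the same splitting of the index via a unitary $\widetilde{U}$ commuting with $\widetilde{\Pi_{1}}$, and the same final citation of the non-relative Toeplitz index theorem. The support/propagation bookkeeping for $u_{\phi_{1}}-U^{\ast}u_{\phi_{2}}U$ that you flag as the main obstacle is exactly the content of the paper's counterparts of Lemmas \ref{lem:red1} and \ref{lem:cindloc}, handled there just as you describe.
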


We prove Theorem \ref{thm:main2} in Subsection \ref{subsec:pr2}. 
In the proof of Theorem \ref{thm:main2},  
we do not use the fact that 
the relative topological Toeplitz index 
$\text{\rm ind}_{t}(\phi_{N_{1}} , D_{N_{1}} , \phi_{N_{2}} , D_{N_{2}})$ 
does not depend on the choice of 
a compactification $\widetilde{N_{i}}$, 
a Clifford bundle $\widetilde{S_{N_{i}}}$ 
and 
a  function $\widetilde{\phi_{N_{i}}}$.  
Thus Theorem \ref{thm:main2} gives a new proof 
of well definedness of 
$\text{\rm ind}_{t}(\phi_{N_{1}} , D_{N_{1}} , \phi_{N_{2}} ,  D_{N_{2}})$. 
That is the same as the case of 
$\text{\rm ind}_{t}(D_{N_{1}} , D_{N_{2}})$. 

\subsection{Proof}
\label{subsec:pr2}

In this subsection, 
we prove Theorem \ref{thm:main2}. 
There are $2$ steps to prove it, which is similar to Subsection \ref{subsec:pr1}. 
Namely, the first step is the reduction to the product case and 
the second one is the proof of the product case. 

Similar argument in Subsection \ref{thm:main1} implies 
we can reduce the product case. 
A function $\phi_{i}'$ on $M_{i}'$ 
is taken as $\phi_{1}' \circ \psi' = \phi_{2}'$ 
on $M_{2}' \setminus W_{2}'$, 
$\phi_{i}' = \phi_{i}$ on $M^{+}$ 
and 
$\phi_{i}' = 1 \otimes \phi_{N_{i}}$ on 
$(-\infty , -r] \times N_{i}$. 
The counterpart of Lemma \ref{lem:red1} 
is as follows. 

\begin{lem}
We have 
\[
(\zeta_{r})_{\ast}(\text{\rm c-ind}(\phi_{1} , D_{1} , \phi_{2} , D_{2})) 
= 
\zeta'_{\ast}(\text{\rm c-ind}(\phi_{1}' , D_{1}' , \phi_{2}' , D_{2}')). 
\]
\end{lem}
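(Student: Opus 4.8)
The plan is to mimic, almost verbatim, the proof of Lemma~\ref{lem:red1}, replacing the scalar coarse relative index by the coarse relative Toeplitz index. The key point is that both indices are represented in $K_1$ of the relevant $C^\ast$-algebra by an \emph{explicit invertible operator with small propagation}, and the reduction only uses Lemma~\ref{lem:Higson} together with the fact that changing the partition from $N_i$ to $N_i'$ (which is carried out via $\zeta_r$) and extending $M_i^+$ by a product collar $(-\infty,-r]\times N_i$ does not affect the relevant Fredholm index. Concretely, first I would recall that, with $\mathcal{D}_i=\chi(D_i)+\epsilon\eta(D_i)$ where $\chi$ has $\mathrm{Supp}(\hat\chi)\subset(-r/4,r/4)$, the operator $u_{\phi_i}$ of Section~1.3 has propagation less than $2r/4=r/2$, and $u_{\phi_i}-U^\ast u_{\phi_{i}'}U$-type differences are supported in $N_{2r}(W_1)$; the class $\text{c-ind}(\phi_1,D_1,\phi_2,D_2)=q([u_{\phi_1},u_{\phi_2}])$ is then represented by $[u_{\phi_1}U^\ast u_{\phi_2}^{-1}U]$ after splitting off the $C^\ast(M_2)$ summand, exactly as in the $\text{c-ind}(D_1,D_2)$ case.

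Next I would choose the unitaries $U$ (for $(M_i,W_i)$) and $U'$ (for $(M_i',W_i')$) appearing in the definition of $q$ so that $U=U'$ on $L^2(M_1^+,S_1')=L^2(M_1^+,S_1)$ and both have propagation less than $r/4$; this is possible since $V_i$ can be taken with arbitrarily small propagation and $\psi'=\psi$ on $M_2^+$. Because $\phi_i'=\phi_i$ on $M^+$ and $\phi_i'=1\otimes\phi_{N_i}$ on the collar, and because all the operators $\chi(D_i),\eta(D_i),\chi(D_i'),\eta(D_i')$ have propagation less than $r/4$ and agree on $M_i^+$ up to the collar region, the invertibles $w:=u_{\phi_1}U^\ast u_{\phi_2}^{-1}U$ and $w':=u_{\phi_1'}U'^\ast u_{\phi_2'}^{-1}U'$ both have propagation less than $r$ (a bounded number of factors of propagation $\leq r/4$ each). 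Hence, sandwiching by the characteristic function $\Pi_{1,r}$ of $M_{1,r}^+$ (resp.\ $\Pi_1'$ of $M_1'{}^+=M_{1,r}^+$) kills all contributions outside $M_1^+$, giving
\[
\Pi_{1,r}\,w\,\Pi_{1,r} = \Pi_1'\,w'\,\Pi_1'.
\]
This is the exact analogue of the displayed identity in the proof of Lemma~\ref{lem:red1}.

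Finally I would invoke Lemma~\ref{lem:Higson}: the isometry $\gamma:M_1'{}^+\to M_{1,r}^+$ is the identity, it lifts the identity on Clifford bundles, it preserves the partitions (both partitions restrict to $M_{1,r}^+$ as the same half-space), and $\gamma^\ast w\Pi_{1,r}=\Pi_1'w'\gamma^\ast$ by the identity just displayed; therefore $(\zeta_r)_\ast([w])=\zeta'_\ast([w'])$, which is the claim. The only genuine content beyond bookkeeping is verifying the propagation bound on $u_{\phi_i}$ — i.e.\ that the two factors $\mathcal{D}_i$ and the diagonal matrices combine to give propagation at most $r/2$, hence $w$ has propagation $<r$ after multiplying by $U^\ast(\cdots)U$ with $U$ of propagation $<r/4$. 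I expect this propagation count, and the careful matching of $\phi_i$ with $\phi_i'$ on the overlap region so that $u_{\phi_i}$ and $u_{\phi_i'}$ literally coincide on $M_1^+$ away from the collar, to be the main (though still routine) obstacle; everything else is an application of Lemmas~\ref{lem:cobor}, \ref{lem:Higson} already used in Subsection~\ref{subsec:pr1}. As in Lemma~\ref{lem:red1}, one then applies the same argument once more to $(M_i',W_i',\phi_i',D_i')$ to reach the genuine product situation $M_i=\mathbb{R}\times N_i$.
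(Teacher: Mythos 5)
Your proposal is essentially the paper's own proof: choose $U=U'$ on $L^{2}(M_{1}^{+},S_{1})$ with small propagation, use a chopping function with compactly supported Fourier transform so that $u_{\phi_{1}}U^{\ast}u_{\phi_{2}}^{-1}U$ and $u_{\phi_{1}'}U'^{\ast}u_{\phi_{2}'}^{-1}U'$ have propagation less than $r$, deduce $\Pi_{1,r}\,u_{\phi_{1}}U^{\ast}u_{\phi_{2}}^{-1}U\,\Pi_{1,r}=\Pi_{1}'\,u_{\phi_{1}'}U'^{\ast}u_{\phi_{2}'}^{-1}U'\,\Pi_{1}'$, and conclude by Lemma \ref{lem:Higson}. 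The only slip is the propagation arithmetic: with $\mathrm{Supp}(\hat{\chi})\subset(-r/4,r/4)$ each of $u_{\phi_{1}}$ and $u_{\phi_{2}}^{-1}$ already has propagation up to $r/2$, so the four-fold product is only bounded by $3r/2$, not $r$; the paper takes $\mathrm{Supp}(\hat{\chi})\subset(-r/8,r/8)$ precisely so that every factor has propagation less than $r/4$ and the total stays below $r$, which is what the sandwiching identity needs. This is a trivially fixable constant, not a gap in the argument.
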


\begin{proof}
Take unitaries $U$ and $U'$ with propagation less than $r/4$ 
as in the proof of Lemma \ref{lem:red1}. 
Take a chopping function $\chi$ such that 
$\mathrm{Supp}(\hat{\chi}) \subset (-r/8 , r/8)$. 
Then operators 
$u_{\phi_{1}}U^{\ast}u_{\phi_{2}}^{-1}U$ 
and 
$u_{\phi_{1}'}U'{}^{\ast}u_{\phi_{2}'}^{-1}U'$ 
have propagation less than $r$. 
Thus we have 
\[
\Pi_{1,r} u_{\phi_{1}}U^{\ast}u_{\phi_{2}}^{-1}U \Pi_{1,r} 
= 
\Pi_{1}' u_{\phi_{1}'}U'{}^{\ast}u_{\phi_{2}'}^{-1}U' \Pi_{1}'.  
\]
By the way, we recall that 
\[
\text{\rm c-ind}(\phi_{1} , D_{1} , \phi_{2} , D_{2}))
= 
\left[ u_{\phi_{1}}U^{\ast}u_{\phi_{2}}^{-1}U \right]. 
\]
Therefore, we complete the proof 
by using Lemma \ref{lem:Higson}. 
\end{proof}

Threfore, 
we reduced to the product case 
$(M_{i} = \mathbb{R} \times N_{i} , 
W_{i} = \mathbb{R} \times Z_{i} , D_{i})$ 
similar to Subsection \ref{subsec:pr1}. 
Here, 
a function $\phi_{i}$ is given by $\phi_{i} = 1 \otimes \phi_{N_{i}}$. 

Let us prove the product case. 
Take a closed manifold $\widetilde{N_{i}}$ and 
a function $\widetilde{\phi_{N_{i}}}$ 
as in the definition of the relative topological Toeplitz index. 
Set 
$\widetilde{M_{i}} = \mathbb{R} \times \widetilde{N_{i}}$, 
$\widetilde{W_{i}} = \mathbb{R} \times Z_{i} = W_{i}$ 
and 
$\widetilde{\phi_{i}} = 1 \otimes \widetilde{\phi_{N_{i}}}$. 
Then 
$(\widetilde{M_{i}} , \widetilde{W_{i}} , \widetilde{D_{i}})$ 
is an even relative index data over $(\mathbb{R}^{2},  \mathbb{R})$, 
here a continuous coarse map 
$\widetilde{f_{i}} : \widetilde{M_{i}} \to \mathbb{R}^{2}$ 
is defined to be $\widetilde{f_{i}}(x,y) = (x , \mathrm{dist}(y,Z_{i}))$. 
The counterpart of Lemma \ref{lem:cindloc} is as follows. 

\begin{lem}
\label{lem:loccToe}
By using the map 
$\Gamma : K_{1}(C^{\ast}(W_{1} \subset M_{1})) 
\to K_{1}(C^{\ast}(\widetilde{W_{1}} \subset \widetilde{M_{1}}))$ 
defined by the identity map $N_{r}(W_{1}) \to N_{r}(\widetilde{W_{1}})$, we have 
\[
\Gamma (\text{\rm c-ind}(\phi_{1} , D_{1} , \phi_{2} , D_{2})) 
= 
\text{\rm c-ind}(\widetilde{\phi_{1}} , \widetilde{D_{1}} , 
\widetilde{\phi_{2}} , \widetilde{D_{2}}). 
\]
\end{lem}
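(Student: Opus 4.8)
The plan is to mimic the proof of Lemma \ref{lem:cindloc} almost verbatim, replacing the Cayley-transform representative $f(D_{1})U^{\ast}f(D_{2})^{\ast}U$ of the coarse relative index by the representative $u_{\phi_{1}}U^{\ast}u_{\phi_{2}}^{-1}U$ of the coarse relative Toeplitz index. First I would fix a chopping function $\chi$ whose Fourier transform satisfies $\mathrm{Supp}(\hat{\chi}) \subset (-r/16 , r/16)$, so that the propagation of $\mathcal{D}_{i} = \chi(D_{i}) + \epsilon\eta(D_{i})$ is less than $r/16$ and hence the propagation of $u_{\phi_{i}}$ is less than $r/8$ (recall $\phi_{i}$ is multiplication by a function, so it has propagation $0$). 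Then I would choose unitaries $U$ and $\widetilde{U}$ as in the definition of $q$ with $U = \widetilde{U}$ on $L^{2}(N_{r}(W_{1}) , S_{1})$ and with propagation less than $r/16$. With these choices the representatives
\[
\text{\rm c-ind}(\phi_{1} , D_{1} , \phi_{2} , D_{2}) = \left[ u_{\phi_{1}}U^{\ast}u_{\phi_{2}}^{-1}U \right], \qquad
\text{\rm c-ind}(\widetilde{\phi_{1}} , \widetilde{D_{1}} , \widetilde{\phi_{2}} , \widetilde{D_{2}}) = \left[ u_{\widetilde{\phi_{1}}}\widetilde{U}^{\ast}u_{\widetilde{\phi_{2}}}^{-1}\widetilde{U} \right]
\]
both have propagation less than $r$.

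Next I would show that $u_{\phi_{1}}U^{\ast}u_{\phi_{2}}^{-1}U - 1$ is supported in $N_{r}(W_{1})$, which is exactly what is needed to apply the map $\Gamma$ to this representative. By the remark following Definition \ref{dfn:crelToeind} (citing the proof of \cite[Lemma 4.3]{MR3439130}), the operator $u_{\phi_{1}} - U^{\ast}u_{\phi_{2}}U$ is supported in $N_{2r}(W_{1})$; combining this with the analogous statement for $u_{\phi_{i}}^{-1}$ and the bounded propagation, a short algebraic manipulation shows $u_{\phi_{1}}U^{\ast}u_{\phi_{2}}^{-1}U$ differs from $1$ only on a bounded neighborhood of $W_{1}$. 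In the product setting $M_{1} = \mathbb{R}\times N_{1}$, $W_{1} = \mathbb{R}\times Z_{1}$, and by coarse transversality the neighborhood in which the difference is supported can be taken inside $N_{r}(W_{1})$ after the initial reduction (shrinking $\chi$'s support as needed); this is the standard localization argument. Then, since $U = \widetilde{U}$ on $L^{2}(N_{r}(W_{1}) , S_{1}) = L^{2}(N_{r}(\widetilde{W_{1}}) , \widetilde{S_{1}})$ and since on this neighborhood the Dirac operators $D_{i}$ and $\widetilde{D_{i}}$ and the functions $\phi_{i}$ and $\widetilde{\phi_{i}}$ literally agree (the compactification only modifies $N_{i}$ away from $Z_{i}$), finite propagation forces
\[
p_{r}\left( u_{\phi_{1}}U^{\ast}u_{\phi_{2}}^{-1}U \right)p_{r} = p'_{r}\left( u_{\widetilde{\phi_{1}}}\widetilde{U}^{\ast}u_{\widetilde{\phi_{2}}}^{-1}\widetilde{U} \right)p'_{r},
\]
where $p_{r}$, $p'_{r}$ are the characteristic functions of $N_{r}(W_{1})$, $N_{r}(\widetilde{W_{1}})$. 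Feeding this identity through the definition of $\Gamma$ gives $\Gamma(\text{\rm c-ind}(\phi_{1} , D_{1} , \phi_{2} , D_{2})) = \text{\rm c-ind}(\widetilde{\phi_{1}} , \widetilde{D_{1}} , \widetilde{\phi_{2}} , \widetilde{D_{2}})$.

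The main obstacle I anticipate is bookkeeping the propagation and support constants so that everything fits inside the single radius $r$ fixed by the map $\Gamma$: unlike in Lemma \ref{lem:cindloc}, the Toeplitz representative $u_{\phi_{i}}$ involves two factors of $\mathcal{D}_{i}$ rather than one factor of $f(D_{i})$, so the propagation doubles and the support estimate from \cite[Lemma 4.3]{MR3439130} gives $N_{2r}$ rather than $N_{r}$. This is purely a matter of choosing $\mathrm{Supp}(\hat{\chi})$ small enough and, if necessary, performing one more cobordism/Higson-type reduction (as in Lemma \ref{lem:red1}) to arrange $N'_{i} \cap M_{i}^{-} = \emptyset$ before invoking $\Gamma$; no new idea beyond those already used in Subsection \ref{subsec:pr1} is required. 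Once the constants are arranged, the identity of compressed operators and the definition of $\Gamma$ finish the proof exactly as in Lemma \ref{lem:cindloc}.
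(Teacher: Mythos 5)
Your proposal is correct and follows essentially the same route as the paper's proof: fix a chopping function with small Fourier support so that $u_{\phi_{i}}$ has controlled propagation, choose $U=\widetilde{U}$ on $L^{2}(N_{r}(W_{1}),S_{1})$ with small propagation, verify that $u_{\phi_{1}}U^{\ast}u_{\phi_{2}}^{-1}U-1$ is supported near $W_{1}$, and conclude from finite propagation and the local agreement of the data that the two representatives have the same image under $\Gamma$. The only differences are in the bookkeeping constants (the paper takes $\mathrm{Supp}(\hat{\chi})\subset(-r/8,r/8)$ and propagation of $U,\widetilde{U}$ less than $r/8$, obtaining support in $N_{r/4}(W_{1})$ and propagation less than $3r/4$), and no additional cobordism reduction is needed.
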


\begin{proof}
Take unitaries $U$ and $\widetilde{U}$ 
such that 
$U = \widetilde{U}$ on $L^{2}(N_{r}(W_{1}) , S_{1})$ and 
propagation of $U$ and $\widetilde{U}$ is less than $r/8$ 
as in the proof of Lemma \ref{lem:cindloc}. 
Take a chopping function $\chi$ such that 
$\mathrm{Supp}(\hat{\chi}) \subset (-r/8 , r/8)$. 
Then operators  
$u_{\phi_{1}} - U^{\ast}u_{\phi_{2}}U$ 
and 
$u_{\phi_{1}}^{-1} - U^{\ast}u_{\phi_{2}}^{-1}U$ 
are supported in $N_{r/4}(W_{1})$, 
so that  an operator  
$u_{\phi_{1}}U{}^{\ast}
u_{\phi_{2}}^{-1}U - 1$  
is supported in $N_{r/4}(W_{1})$. 
Similarly, 
an operator  
$u_{\widetilde{\phi_{1}}}\widetilde{U}{}^{\ast}
u_{\widetilde{\phi_{2}}}^{-1}\widetilde{U} - 1$  
is supported in $N_{r/4}(\widetilde{W_{1}}) = N_{r/4}(W_{1})$.  
Since propagation of  
$u_{\phi_{1}}U{}^{\ast}u_{\phi_{2}}^{-1}U$ and 
$u_{\widetilde{\phi_{1}}}\widetilde{U}^{\ast}
u_{\widetilde{\phi_{2}}}^{-1}\widetilde{U}$
is less than $3r/4$, 
%we have 
%\[
%p_{r} u_{\psi_{1}}U^{\ast}u_{\psi_{2}^{-1}}U p_{r} 
%= 
%\widetilde{p_{r}}u_{\widetilde{\psi_{1}}}\widetilde{U}^{\ast}
%u_{\widetilde{\psi_{2}}^{-1}}\widetilde{U}\widetilde{p_{r}},  
%\]
%here $p_{r}$ (resp. $\widetilde{p_{r}}$) is 
%the characteristic function of $N_{r}(W_{1})$ 
%(resp. $N_{r}(\widetilde{W_{1}}) = N_{r}(W_{1})$). 
%Therefore, 
we have 
\begin{align*}
\Gamma (\text{\rm c-ind}(\phi_{1} , D_{1} , \phi_{2} , D_{2})) 
= 
\text{\rm c-ind}(\widetilde{\phi_{1}} , \widetilde{D_{1}} , 
\widetilde{\phi_{2}} , \widetilde{D_{2}}). 
\end{align*}
\end{proof}

Finally, we comlete the proof of Theorem \ref{thm:main2} 
for the product case. 
By Lemma \ref{lem:locW}, 
we have 
\[
\zeta_{\ast} (\text{\rm c-ind}(\phi_{1} , D_{1} , \phi_{2} , D_{2}) = 
\widetilde{\zeta}_{\ast}(\Gamma (\text{\rm c-ind}(\phi_{1} , D_{1} , \phi_{2} , D_{2}))). 
\]
By Lemma \ref{lem:loccToe}, 
the value equals 
$\widetilde{\zeta}_{\ast}(\text{\rm c-ind}(\widetilde{\phi_{1}} , \widetilde{D_{1}} , 
\widetilde{\phi_{2}} , \widetilde{D_{2}}))$. 
Since $\widetilde{N_{1}}$ and $\widetilde{N_{2}}$ are closed 
and we can take a unitary $\widetilde{U}$ satisfies 
$\widetilde{U}\widetilde{\Pi_{1}} = \widetilde{\Pi_{2}}\widetilde{U}$, 
we have 
\begin{align*}
& \, \widetilde{\zeta}_{\ast}
(\text{\rm c-ind}(\widetilde{\phi_{1}} , \widetilde{D_{1}} , 
\widetilde{\phi_{2}} , \widetilde{D_{2}})) \\ 
=& -\frac{1}{8\pi i}
\ind \left( \widetilde{\Pi_{1}}u_{\widetilde{\phi_{1}}}\widetilde{U}^{\ast}
u_{\widetilde{\phi_{2}}}^{-1}\widetilde{U} \widetilde{\Pi_{1}}\right) \\ 
=& -\frac{1}{8\pi i}
\ind \left( 
\widetilde{\Pi_{1}} 
u_{\widetilde{\phi_{1}}} \widetilde{\Pi_{1}} \right) 
+ \frac{1}{8 \pi i}
\ind \left( 
\widetilde{\Pi_{2}} 
u_{\widetilde{\phi_{2}}} \widetilde{\Pi_{2}} \right) . 
\end{align*}
By an index theorem for Toeplitz operators on partitioned mannifolds  
\cite[Theorem 2.6]{seto2},    
% Seto, Toeplitz operators and the Roe-Higson type index theorem 
the value equals
\[ 
-\frac{1}{8\pi i}
\left( \ind \left(T_{\widetilde{\phi_{N_{1}}}} \right) 
- \ind \left(T_{\widetilde{\phi_{N_{2}}}} \right)  \right). 
\]
This is nothing but the relative topological Toeplitz index, 
so that the proof is completed.

\bibliographystyle{amsplain}
\bibliography{cyclic_relative_partitioned_ref}

\end{document}